\long\def\commentout#1{}
\newif\ifprint
	\definecolor{linkred}{rgb}{0,0,0} 
	\definecolor{linkblue}{rgb}{0,0,0} 
	\definecolor{linkred}{rgb}{0.7,0.2,0.2}
	\definecolor{linkblue}{rgb}{0,0.2,0.6}
\definecolor{green}{rgb}{0,0.8,0.1}
\titleformat{\section}{\normalfont\large\bfseries}{\thesection.}{0.5em}{}[\kern0.em]
\titleformat{\subsection}[runin]{\normalfont\bfseries}{\thesubsection.}{0.2em}{}[\kern0.5em]
\titleformat{\subsubsection}[runin]{\normalfont\bfseries}{\thesubsubsection.}{0.1em}{}[\kern0.5em]
\def\maketitle{\par
  \@topnum\z@ 
  \@setcopyright
  \thispagestyle{firstpage}
  \ifx\@empty\shortauthors \let\shortauthors\shorttitle
  \else \andify\shortauthors
  \fi
  \@maketitle@hook
  \begingroup
  \@maketitle
  \toks@\@xp{\shortauthors}\@temptokena\@xp{\shorttitle}%
  \toks4{\def\\{ \ignorespaces}}
  \edef\@tempa{%
    \@nx\markboth{\the\toks4
      \@nx{\the\toks@}}{\the\@temptokena}}
  \@tempa
  \endgroup
  \c@footnote\z@
  \@cleartopmattertags
}
\def\@settitle{\begin{center}%
  \baselineskip14\p@\relax
    \bfseries
\Large
\@title
  \end{center}%
}
\def\author@andify{%
  \nxandlist {\unskip ,\penalty-1 \space\ignorespaces}%
    {\unskip {} \@@and~}%
    {\unskip ,\penalty-2 \space \@@and~}%
}
\def\@setauthors{%
  \begingroup
  \def\thanks{\protect\thanks@warning}%
  \trivlist
  \centering\footnotesize \@topsep30\p@\relax
  \advance\@topsep by -\baselineskip
  \item\relax
  \author@andify\authors
  \def\\{\protect\linebreak}%
{\large\authors}
  \ifx\@empty\contribs
  \else
    ,\penalty-3 \space \@setcontribs
    \@closetoccontribs
  \fi
  \endtrivlist
  \endgroup
}
\def\@setaddresses{\par
  \nobreak \begingroup
\footnotesize
  \def\author##1{\nobreak\addvspace\bigskipamount}%
  \def\\{\unskip, \ignorespaces}%
  \interlinepenalty\@M
  \def\address##1##2{\begingroup
    \par\addvspace\bigskipamount\indent
    \@ifnotempty{##1}{(\ignorespaces##1\unskip) }%
    {\ignorespaces##2}\par\endgroup}
  \def\curraddr##1##2{\begingroup
    \@ifnotempty{##2}{\nobreak\indent\curraddrname
      \@ifnotempty{##1}{, \ignorespaces##1\unskip}\/:\space
      ##2\par}\endgroup}%
  \def\email##1##2{\begingroup
    \@ifnotempty{##2}{\nobreak\indent\emailaddrname
      \@ifnotempty{##1}{, \ignorespaces##1\unskip}\/:\space
      \ttfamily##2\par}\endgroup}%
  \def\urladdr##1##2{\begingroup
    \def~{\char`\~}%
    \@ifnotempty{##2}{\nobreak\indent\urladdrname
      \@ifnotempty{##1}{, \ignorespaces##1\unskip}\/:\space
      \ttfamily##2\par}\endgroup}%
  \addresses
  \endgroup
}
\renewenvironment{abstract}{%
  \ifx\maketitle\relax
    \ClassWarning{\@classname}{Abstract should precede
      \protect\maketitle\space in AMS document classes; reported}%
  \fi
  \global\setbox\abstractbox=\vtop \bgroup
    \normalfont\Small
    \list{}{\labelwidth\z@
      \leftmargin3pc \rightmargin\leftmargin
      \listparindent\normalparindent \itemindent\z@
      \parsep\z@ \@plus\p@
      
    }%
    \item[\hskip\labelsep\bfseries\abstractname.]%
}{%
  \endlist\egroup
  \ifx\@setabstract\relax \@setabstracta \fi
}
\def\section{\@startsection{section}{1}%
  \z@{.7\linespacing\@plus\linespacing}{.5\linespacing}%
  {\normalfont\bfseries\centering}}
\renewcommand*\l@section{\@tocline{1}{0pt}{0em}{1.75em}{}}
\renewcommand*\l@subsection{\@tocline{2}{0pt}{1.75em}{2em}{}}
\def\ps@normal{\def\@oddhead{\small \hfill \leftmark \hfill\thepage }
\def\@evenhead{\small \thepage \hfill \rightmark \hfill}
\def\@oddfoot{}
\def\@evenfoot{}}
\def\bib@div@mark#1{%
 \@mkboth{{#1}}{{#1}}%
	}
\def\print@backrefs#1{%
 \space\SentenceSpace$\leftarrow$\csname br@#1\endcsname
}
\renewcommand{\PrintAuthors}[1]{%
 \ifx\previous@primary\current@primary
  \sameauthors\@empty
 \else
  \def\current@bibfield{\bib'author}%
		  \PrintNames{}{}{\scshape #1}%
 \fi
}
\def\MRhref#1#2{%
 \begingroup
 \parse@MR#1 ()\@empty\@nil%
  \href{\MR@url}{\texttt{\@tempd\vphantom{()}}}%
  \ifx\@tempe\@empty
  \else
   \ \href{\MR@url}{\texttt{(\@tempe)}}%
  \fi
 \endgroup
}%
\def\MR#1{%
 \relax\ifhmode\unskip\spacefactor3000 \space\fi
 \begingroup
 \strip@MRprefix#1\@nil
  \edef\@tempa{\@nx\MRhref{MR\@tempa}{\@tempa}}%
 \@xp\endgroup
 \@tempa
}
\newcommand{\loosespread}{\linespread{1.15}\normalfont\selectfont}
\newcommand{\bibspread}{\linespread{1.02}\normalfont\selectfont}
\renewcommand{\arraystretch}{1.2}
\newcommand{\noparskip}{\parskip0pt}
\newcommand{\smallparskip}{\parskip3pt}
\newtheoremstyle{questions}{\thm@preskip}{\thm@preskip}%
     {\itshape}
     {}
     {\bfseries}
     {}
     {6pt}
     {\thmname{#1}\thmnumber{ #2}\thmnote{ \textbf{~[#3]}}}
\theoremstyle{plain}
\newtheorem{thm}[equation]{Theorem}
\newtheorem{prop}[equation]{Proposition}
\newtheorem{lem}[equation]{Lemma}
\newtheorem{cor}[equation]{Corollary}
\newtheorem{conjprob}[equation]{Conjecture-Problem}
\theoremstyle{questions}
\newtheorem{ttsque}[equation]{Total-to-parts Question}
\theoremstyle{definition}
\newtheorem{rem}[equation]{Remark}
\newtheorem{ex}[equation]{Example}
\newtheorem{exs}[equation]{Examples}
\newcommand{\ds}[1]{\displaystyle{#1}}
\newcommand{\isom}{\cong}
\newcommand{\sspan}{\operatorname{span}}
\newcommand{\Aut}[1]{\operatorname{Aut}}
\newcommand{\RR}{\mathbb{R}}
\newcommand{\sq}[1]{[#1]}
\newcommand{\ang}[1]{\langle{#1}\rangle}
\newcommand{\kk}{\mathbf{k}}
\newcommand{\dd}{\mathbf{d}}
\newcommand{\rr}{\mathbf{r}}
\newcommand{\ddhat}{\widehat{\mathbf{d}}}
\newcommand{\chat}{\widehat{c}}
\newcommand{\pp}{\mathbf{p}}
\newcommand{\ii}{\mathbf{i}}
\newcommand{\ff}{\mathbf{f}}
\newcommand{\FF}{\mathbf{F}}
\newcommand{\ee}{\mathbf{e}}
\renewcommand{\SS}{\mathbf{S}}
\newcommand{\CCC}{\mathbb{C}}
\newcommand{\PPP}{\mathbb{P}}
\newcommand{\AAA}{\mathbb{A}}
\newcommand{\VVV}{\mathbb{V}}
\newcommand{\QQ}{\mathbb{Q}}
\newcommand{\thst}[2]{\ensuremath{{#1}^{\mathrm{#2}}}}
\newcommand{\subsecext}[1]{{{\small\S\S}\kern1.5pt{#1}}}
\newcommand{\subsec}[1]{{{\small\S\S}\kern1.5pt\ref{#1}}}
\newcommand{\sect}[1]{{section~\ref{#1}}}
\newcommand{\sects}[2]{{sections~\ref{#1} and~\ref{#2}}}
\newcommand{\Sects}[2]{{Sections~\ref{#1} and~\ref{#2}}}
\newcommand{\question}[1]{Question~\ref{#1}}
\renewcommand{\theequation}{\thesubsection.\arabic{equation}}
\numberwithin{equation}{subsection}
\def\calcbaseurl{http://faculty.fordham.edu/dswinarski/totaltoparts/}
\newcommand{\calcpage}[1]{\href{\calcbaseurl#1}{{\sffamily{\texttt{#1}}}}}
\newcommand{\calcpagecite}[1]{\cite{codesamples}*{\href{\calcbaseurl#1}{{\sffamily{\texttt{#1}}}}}}
\newcommand{\calctoccite}[1]{\edef\ctanch{\calcbaseurl toc.htm\##1}\cite{codesamples}*{\href{\ctanch}{{\sffamily{\texttt{toc.htm\##1}}}}}}
\newcommand{\neturl}[1]{\href{#1}{{\sffamily{\texttt{#1}}}}}
\newcommand{\neturltilde}[2]{\href{#1}{{\sffamily{\texttt{#2}}}}}
\newif\ifauthornotes\authornotesfalse
\long\def\commentout#1{}
\newif\ifshrink 
\newif\ifextracted 
\begin{document}
\loosespread
\noparskip 

\author{Ian Morrison}
\address{Department of Mathematics\\Fordham University\\Bronx, NY 10458} 
\email{morrison@fordham.edu}

\author{David Swinarski}					
\address{Department of Mathematics\\Fordham University\\Bronx, NY 10458} 
\email{dswinarski@fordham.edu}

\title{Can you play a fair game of craps with a loaded pair of dice?}

\subjclass[2010]{Primary 60C05, 14Q15}
\keywords{distribution, fair, coins, dice}

\begin{abstract}
We study, in various special cases, total distributions on the product of a finite collection of finite probability spaces and, in particular, the question of when the probability distribution of each factor space is determined by the total distribution.\ifshrink\vskip-48pt\hbox{~}\fi
\end{abstract}


\maketitle
\thispagestyle{empty}

\par
\leavevmode\hbox{}
\ifshrink~$~$\vskip-96pt\else\vskip-24pt\tableofcontents\smallparskip\fi

\section{Introduction}\label{intro}\stepcounter{subsection}

Craps is played by rolling two standard cubical dice with sides numbered $1$ to~$6$.
\ifshrink
Play is determined by the totals that show whose probabilities we call the \emph{total distribution} of the dice. For details of the rules and the pretty probability arguments computing the odds of winning, see \calcpagecite{1.craps}. 
\else
For readers who may not have seen them, we review in an Appendix (\sect{craps}) the rules of the game and the pretty probability arguments needed to compute the odds of winning. Here we simply note that the basic play (ignoring common side bets) depends only on the total of the numbers showing on the two dice. We call the probabilities of these totals the \emph{total distribution} of the two dice.
\fi
The dice are called \emph{fair} if each number is equally likely to be rolled and \emph{loaded} otherwise.
The question in the title asks whether there are any pairs of loaded dice for which this total distribution matches that of a fair pair. With such a loaded pair of dice, it would be possible to play a game of craps with the same probabilities of game events as if fair dice were being used. We give three proofs (in Proposition~\ref{crapsfairno},
\ifshrink
\calcpagecite{2.solutions}%
\else
Table~\ref{twodicefiftyone}%
\fi%
, and Corollary~\ref{crapsfairnobis}) that no such loaded dice exist. The first two proofs involve solving first partially then fully an explicit, dependent set of $11$ quadratic and $2$ linear equations in the $12$ side probabilities using a lengthy computer calculation by the mathematical software package \texttt{Magma}. The third follows by hand in a few lines from a very different formulation of the problem.

We were led to this reformulation by studying a much larger class of questions of this type that are the real focus of the paper. We consider dice of any order $k\ge 2$ whose sides may have arbitrary probabilities, and call any finite set of such dice, of possibly different orders, a \emph{sack} $\SS$. The probabilities of seeing each possible total when the dice in a sack are rolled we call its total distribution, and we ask what we call the \emph{total-to-parts question}: When are the probability distributions of all the dice in the sack determined by its total distribution?  In some cases, which we call \emph{exotic}, the answer to this more general question turns out to be negative; for example, computational evidence suggests that there are exotic pairs of dice of any order greater than or equal to $12$ (cf.~Conjecture-Problem~\ref{exoticpairsexist}). In fact, we generalize still further by allowing arbitrary complex numbers as probabilities.  This is explained in \sect{totalpartssetup}, where all the terms used in this introduction are defined. Our answers are given in \sects{multpoly}{exoticsacks}.

We have taken the unusual step of leaving some of the scaffolding of our work on these questions standing in this paper. Our motivation is to provide an example to novice mathematicians of the embryology of a research project. In one direction, we hope to illustrate both how generalizing a problem often, paradoxically, places it in a simpler context. In the opposite direction, the more general context may reveal interesting new special cases whose analysis offers new avenues of attack on the original problem. 

Here, very briefly is how our work illustrates these ideas. After setting up the largest class of questions that we deal with in \subsec{defnot} and \subsec{totalparts}, we explain in \subsec{geometric} how we can fit \emph{all} the part and total distributions for a fixed combinatorial type or vector of orders $\kk$ into a function $\FF_{\kk}: \CCC^T \to \CCC^T$, each of whose $T$ coordinate functions is polynomial in the $T$ input variables $\null$\footnote{We use bracketed text like this to provide translations into the language of algebraic geometry, in which both authors work, for readers familiar with it, but other readers can, with no loss, simply ignore these insertions.}[or an affine morphism. In other words, the generalizations fit into algebraic families.] 

Each \emph{individual} generalization (for example, the title question itself) concerns a fiber of this function---the inverse image of a \emph{single} point in its range. The answer to such questions can often be deduced for most or even all points [or generically] from geometric invariants of $\FF_{\kk}$ that can be much simpler to calculate. Further, these geometric invariants can often be calculated by understanding any suitably ``nice'' fiber [or specializing]. We may be able to find ``nice'' fibers that are particularly easy to analyze, and then apply our analysis to a fiber of particular interest, like the craps fiber [or generalize].  This is how we answer a number of questions stated in \subsec{totalparts} in \sect{multpoly}.

Another motivation for recapitulating our progress on the total-to-parts question is to highlight the way apparently irrelevant areas of mathematics have of unlocking problems, and the corollary value of having the broadest possible exposure to all areas of mathematics. This is already clear from the preceding paragraphs. The title question concerns probability distributions, but by having some familiarity with algebraic geometry, we make it easier. The sequel will make our point even clearer. The insight of Mike Stillman, discussed below, provides a new way to think about the maps $\FF_{\kk}$ that greatly simplifies all  questions in \sect{multpoly}. It also leads to constructions of exotic pairs of dice in \sect{exoticsacks} that rely on understanding factorizations of cyclotomic polynomials. We hope that an exposition organized to make these precepts stand out will justify the overhead incurred.

The work discussed here developed as follows. Our interest was provoked by a talk given at Fordham by Jordan Stoyanov in which he asked whether a pair of non-standard ``dice'' might have the same total distribution as a standard pair. The quotation marks here are because he defined dice differently than we do: Stoyanov allowed the number of spots on the sides of his ``dice'' to be non-standard, but required his ``dice'' to have a uniform probability distribution. With the requirement that the number of spots be positive integers, there is, up to permutation, a unique non-standard pair of six sided dice with sides marked $(1,2,2,3,3,4)$ and $(1,3,4,5,6,8)$ (cf.~\cite{Stoyanov}*{\subsecext{12.7}}) as the reader may find it an amusing exercise to verify.  

We both felt immediately that the total-to-parts question considered here was a more natural one. Indeed, during Stoyanov's talk, we worked out the case of two coins (given in Example~\ref{twocoinsfair}, but an easy exercise that we urge the reader to attempt before going further) and wrote down the equations which encode solutions for a pair of standard dice (analyzed in Example~\ref{twosixsideddice}). We then passed through a series of less special cases uncovering many pretty arguments along the way, some of which we give here. These included cases where the total distribution was that of a pair of fair dice of small orders and of a sack of fair coins (reviewed in \sect{coins}). It was this last case that first suggested to us---tardily, the reader may justly remark---considering the general total-to-parts problem. Trying to generalize our arguments and failing, we retreated to the case of a coin and a die whose analysis, reviewed in \sect{coindie}, hinted at a connection to factorizations of polynomials. 

At this point, we mentioned the problem and our results to Dave Bayer, who was on sabbatical at MSRI. Dave passed it on to Mike Stillman (also visiting MSRI) over a BBQ dinner, and Mike immediately saw that there was a completely different metaphor for our general problem. When Dave emailed me Mike's idea, the first word in my reply was ``Wow!'' because it was immediately clear that this new perspective gave easy answers to many of the questions we had not yet settled, and shorter proofs of many of our results. Readers are invited to follow our progress up to the end of \sect{coindie}, and then, we hope, will share the delight we felt on seeing this work radically simplified by Mike's insight when we reveal it in \sect{multpoly}. \Sects{multpoly}{exoticsacks} use this insight to produce many exotic sacks of two dice and poses some intriguing asymptotic conjectures about them that we leave as open problems for interested readers.  

Computer calculations in played an essential role in our investigation.  We used the commercial software packages \texttt{Magma} and \texttt{Maple}\texttrademark\ \cites{Magma, Maple} as well as the free, open-source packages \texttt{Bertini}, \texttt{Macaulay2}, \texttt{QEPCAD-B}, and \texttt{Sage} \cites{Bertini, Macaulay, QEPCADB, SAGE}.  We have posted many of our calculations online~\cite{codesamples}, typically giving both the input commands and output of an interactive session.  We have tried to make these sessions informal introductions to the packages by commenting each step and hope that readers will find models in our code for calculations they want to make.  

\subsection*{Acknowledgements} We thank Jordan Stoyanov for suggesting the problem. 
  We also offer special thanks to Mike Stillman for the insight underpinning all of \sect{multpoly} and to Dave Bayer for eliciting and communicating it.

\section {First examples}\label{crapsfairsection}\stepcounter{subsection}

During Stoyanov's lecture we already worked out the simplest example, replacing the pair of six-sided dice by two coins.  

\begin{ex}\label{twocoinsfair}
In tossing two fair coins, the probabilities of seeing $0$, $1$ or $2$ heads---i.e., $(HH, HT \text{~or~} TH, TT)$---are $(\frac{1}{4},\frac{1}{2},\frac{1}{4})$.
Let $p$ and $q$ be the probabilities of heads on the two possibly loaded coins, so that the probabilities of tails are $(1-p)$ and $(1-q)$.  Then to obtain the total distribution of two fair coins we must have:
\begin{displaymath}
\begin{array}{rcl}
pq & = & \frac{1}{4}\\
p(1-q)+(1-p)q & = &  \frac{1}{2}\\
(1-p)(1-q) & = &  \frac{1}{4}\\
\end{array}
\end{displaymath}
Substituting the first equation into the third gives $p+q=1$.  Substituting $q=1-p$ into the first equation yields $p^2-p+\frac{1}{4}=(p-\frac{1}{2})^2= 0$  so $p = q = \frac{1}{2}$. 
\end{ex}

The process of substitution used above is called \emph{elimination} and is, no doubt, familiar from solving systems of linear equations. During Stoyanov's lecture, we realized that the case of two dice could be answered by these methods and wrote down the equations below. We just as quickly realized that the complexity of the calculations called for the help of one of the many computer algebra systems that implement algorithms, analogous to, but more intricate than, that of Gauss-Jordan, for elimination in systems of \emph{polynomial} equations. A few cases simple enough that elimination can be carried out by hand are treated in \sect{coindie}.

\begin{ex} \label{twosixsideddice}
Let $p_i$ and $q_j$ be the probabilities that the first and second dice come up showing each number from $1$ to $6$. Now we have $36$ possible rolls $(i,j)$, each of which would have probability $\frac{1}{6}\cdot \frac{1}{6}= \frac{1}{36}$ if the dice were fair. Grouping these rolls first by the total $i+j$ and then by the number of rolls yielding each total gives the following system of equations.

\noindent
{\setlength{\tabcolsep}{0.1em}%
\begin{tabular}{>{$}r<{$}>{$}l<{$}>{$}c<{$}>{$}r<{$}>{$}l<{$}}%
	p_1 q_1 &=& \frac{1}{36} &=&p_6 q_6\\
	p_1 q_2 + p_2 q_1 &=& \frac{2}{36} &=& p_5 q_6 + p_6 q_5 \\
	p_1 q_3 + p_2 q_2 + p_3 q_1 &=& \frac{3}{36} &=&p_4 q_6 + p_5 q_5 + p_6 q_4 \\
\ifextracted\else\refstepcounter{equation}\label{twodiceequations}\text{\lower1pt\hbox{\textbf{(\theequation)}\kern38pt}}\fi
	p_1 q_4 + p_2 q_3 + p_3 q_2 + p_4 q_1  &=& \frac{4}{36} &= &p_3 q_6 + p_4 q_5 + p_5 q_4 + p_6 q_3 \\
	p_1 q_5 + p_2 q_4 + p_3 q_3 + p_4 q_2 + p_5 q_1   &= &\frac{5}{36} &=&p_2 q_6 + p_3 q_5 + p_4 q_4 + p_5 q_3 + p_6 q_2 \\
	p_1 q_6 + p_2 q_5 + p_3 q_4 + p_4 q_3 + p_5 q_2 + p_6 q_1   &=& \frac{6}{36} &&\\
	\displaystyle{\sum_{i=1}^{6}} p_i &=& 1 &= & \displaystyle{\sum_{i=1}^{6}} q_i\,.
\end{tabular}
}

Moreover, despite the fact that we have $13$ equations in only $12$ unknowns, we expected to find only a \emph{finite} number of solutions, because these equations are dependent: the sum of all the products on the first six lines is the product of the sums on the last. As an easy exercise, we invite the reader to write down the analogous system of $7$ equations in $6$ unknowns for triangular dice.
\end{ex}

Algebraic geometry is the study of systems of polynomial equations.  However, in addition to the $13$ equations above, we also want the $p_i$ and $q_j$ to be real and to be between $0$ and $1$.  Thus, we have a system of polynomial equations and \emph{inequalities}. Such systems are the subject of \emph{semialgebraic geometry}, and our first approach was to apply its techniques.  

The first package we used to investigate whether fair dice give the only solution was \texttt{QEPCAD-B}~\cite{QEPCADB}. The acronym stands for \textbf{Q}uantifier \textbf{E}limination by \textbf{P}artial \textbf{C}ylindrical \textbf{A}lgebraic \textbf{D}ecomposition, and the tool combines polynomial manipulations with boolean algebra to determine whether or not positive real valued solutions exist without actually finding them. The package confirmed two coins must be fair, but crashed when we tried the analogous calculations for dice with more sides~\calcpagecite{2.qepcadb.htm}.

After this brief foray into semialgebraic geometry, we returned to our roots as algebraic geometers and analyzed the system (\ref{twodiceequations}) by elimination.  We turned to \texttt{Magma}~\cite{Magma}, a sophisticated commercial computer algebra system, which performs elimination via Gr\"obner bases in a way that more directly generalizes Gaussian elimination. We will not explain Gr\"obner bases and their applications to elimination here, but we warmly recommend \cite{CoxLittleOShea}*{Chapters 2 and 3} for an introduction. We first used \texttt{Magma} to eliminate all the variables except $q_6$~\calcpagecite{2.magma-a.htm}, very much as in the example of two coins above, providing a single polynomial for $q_6$ whose roots are the possible values of $q_6$ at different solution points.  In particular, only finitely many values of $q_6$ occur in the solutions of the system.  We then substituted each possible positive real value for $q_6$ into the original equations and eliminated all the variables except $q_5$ to find the possible pairs $(q_5,q_6)$ with both values real and positive.   By repeating this process several times, we eventually found that the only solution of these thirteen equations with all coordinates real and positive is a pair of fair dice. This constituted our first proof of the following fact:

\begin{prop}
\label{crapsfairno} You can't play a fair game of craps with a loaded pair of dice.
\end{prop}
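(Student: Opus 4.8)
The plan is to convert the quadratic system \eqref{twodiceequations} into a single identity between one-variable polynomials and then read off the answer from unique factorization. Encode the two possibly loaded dice by the probability generating functions $P(x)=\sum_{i=1}^{6}p_i x^i$ and $Q(x)=\sum_{j=1}^{6}q_j x^j$. Comparing coefficients, the first six lines of \eqref{twodiceequations} say precisely that $P(x)Q(x)=\tfrac{1}{36}\bigl(x+x^2+\cdots+x^6\bigr)^2$, the generating function of a fair pair, while the last line records $P(1)=Q(1)=1$. So Proposition~\ref{crapsfairno} is equivalent to the assertion that the only real solution of
\[
P(x)\,Q(x)=\frac{1}{36}\bigl(x+x^2+\cdots+x^6\bigr)^2,\qquad P(1)=Q(1)=1,
\]
with all $p_i,q_j\ge 0$ is the fair pair $p_i=q_j=\tfrac{1}{6}$.

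Next I would factor the right-hand side over $\QQ$. Since $x+x^2+\cdots+x^6=x\cdot\frac{x^6-1}{x-1}=x\,(x+1)(x^2+x+1)(x^2-x+1)$, and the three nonconstant factors are irreducible (they are the cyclotomic polynomials $\Phi_2,\Phi_3,\Phi_6$, the two quadratics having negative discriminant and so remaining irreducible over $\RR$), the identity becomes
\[
P(x)\,Q(x)=\frac{1}{36}\,x^2\,(x+1)^2(x^2+x+1)^2(x^2-x+1)^2.
\]
From $p_1q_1=\tfrac{1}{36}\ne 0$ and $p_6q_6=\tfrac{1}{36}\ne 0$ all of $p_1,p_6,q_1,q_6$ are nonzero, so $P(x)=x\,\widetilde P(x)$ and $Q(x)=x\,\widetilde Q(x)$ with $\widetilde P,\widetilde Q\in\RR[x]$ of degree exactly $5$, nonzero constant term, and $\widetilde P\,\widetilde Q=\tfrac{1}{36}(x+1)^2(x^2+x+1)^2(x^2-x+1)^2$.

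The key step is then a parity count in the unique factorization domain $\RR[x]$: $\widetilde P$ is a real scalar times a sub-multiset of the six irreducible factors $\{x+1,\,x+1,\,x^2+x+1,\,x^2+x+1,\,x^2-x+1,\,x^2-x+1\}$, and $\widetilde Q$ the complementary scalar times the complementary sub-multiset. Since $\deg\widetilde P=5$ is odd and the only odd-degree irreducible in stock is $x+1$ (with just two copies), $\widetilde P$ takes exactly one factor $x+1$ and exactly two of the four quadratics, and likewise $\widetilde Q$. Up to interchanging the two dice this leaves only two cases: either one die receives $(x^2+x+1)^2$ and the other $(x^2-x+1)^2$, or each receives $(x^2+x+1)(x^2-x+1)$. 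Imposing $\widetilde P(1)=\widetilde Q(1)=1$ fixes the scalars; a one-line expansion shows that in the first case one of $\widetilde P,\widetilde Q$ equals $\tfrac12(x+1)(x^2-x+1)^2$, which has a negative coefficient and so is not a probability distribution, while in the second case $\widetilde P(x)=\widetilde Q(x)=\tfrac{1}{6}(x+1)(x^2+x+1)(x^2-x+1)=\tfrac{1}{6}(1+x+\cdots+x^5)$, i.e. a fair pair of dice. This exhausts the possibilities and proves the proposition.

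Almost all of the work here is the conceptual translation, so there is no single hard obstacle; the one point needing care is the exhaustiveness of the case split, which is exactly where unique factorization combined with the parity of $\deg\widetilde P$ does the real work. (One could instead attack \eqref{twodiceequations} directly by elimination, as begun in Example~\ref{twosixsideddice} — substituting $p_1q_1=p_6q_6$, and so on — but there the obstacle is genuine: the elimination is far too intricate to carry out by hand, which is why the \texttt{Magma} Gr\"obner basis computation described above was used for our first proof.)
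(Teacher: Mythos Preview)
Your argument is correct, but it is not the proof the paper gives \emph{at} Proposition~\ref{crapsfairno}. There the proof is purely computational: the system~(\ref{twodiceequations}) is attacked by Gr\"obner-basis elimination in \texttt{Magma}, successively eliminating $q_6, q_5, \ldots$ and checking at each stage which partial solutions remain real and nonnegative, until only the fair pair survives. No generating functions, no factorization, no cyclotomic polynomials---just machine elimination on thirteen polynomial equations.

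What you have written is, almost verbatim, the paper's \emph{third} proof, given much later as Corollary~\ref{crapsfairnobis} after Stillman's observation that the part-to-total map is polynomial multiplication. Your translation to $P(x)Q(x)=\frac{1}{36}x^2(x+1)^2(x^2+x+1)^2(x^2-x+1)^2$, the parity argument forcing exactly one $(x+1)$ into each factor, and the rejection of $(x+1)(x^2-x+1)^2$ for its negative coefficient are exactly the steps of Proposition~\ref{ddfactors} and Corollary~\ref{crapsfairnobis} specialized to $k=6$. So you have independently found the elegant route the paper builds up to over several sections; the trade-off is that the computational proof at Proposition~\ref{crapsfairno}, while opaque, also yields the full solution set over~$\CCC$ (the $51$ points of Table~\ref{twodicefiftyone}), which your argument does not directly produce. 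Your closing parenthetical already anticipates this contrast accurately.
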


\ifshrink 
We then used \texttt{Magma}~\calcpagecite{2.magma-b.htm} to describe the solution set [affine scheme] of (\ref{twodiceequations}) geometrically, first finding that it is finite and that if the solutions are counted with multiplicities, there are $252$ [the solutions form zero-dimensional scheme of degree 252]. We then divided the solutions into groups definable by equations with rational coefficients [found the irreducible components over $\QQ$ of our scheme] and noticed that all of the complex points had coordinates in the field $\QQ[\sqrt{-3}]$.  This allowed us to compute all $51$ complex solutions of the system exactly. 
They are shown at~\calcpagecite{2.solutions}, grouped into $25$ pairs where the dice differ and the fair solution where the two dice are identical.  Of the nonfair solutions, only one is real, and it contains negative entries. This piqued our interest in what happens for dice with any number of sides, and suggested that the natural context for answering these questions needed to allow arbitrary complex ``probabilities''.
\else
Although we were pleased to have an answer to the title question of the paper, we wanted to understand the system (\ref{twodiceequations}) better.  We used  \texttt{Magma} again in~\calcpagecite{2.magma-b.htm} to compute the dimension and degree of the solution set [affine scheme] of this system of equations, and discovered that it is zero-dimensional of degree 252.  This means that the system has 252 complex solutions (counted with multiplicity).  We decomposed this scheme into its irreducible components over $\QQ$---that is, irrational solutions presented in groups definable by equations with rational coefficients--- and noticed that all of the complex points had coordinates in the subfield $\QQ[\sqrt{-3}] \subset \mathbb{C}$.  This allowed us to compute all $51$ complex solutions of the system exactly. 
The solutions are shown in Table~\ref{twodicefiftyone}, where they are grouped into $25$ pairs where the dice differ, and the fair solution where the two dice are identical.  Of the nonfair solutions, only the pair in the second line is real, and it contains negative entries.  Also, in the table, we write the solutions in terms of the number $\zeta = \zeta_6 = e^\frac{2\pi i}{6}= \frac{1}{2}(1+\sqrt{3}\,i)$ instead of $\sqrt{-3}$; the reason we prefer this notation will become clear in \sect{multpoly}.

\begin{table}[th!]
	\begin{center}
\refstepcounter{equation}\label{twodicefiftyone} \textbf{Table~\ref{twodicefiftyone}} \quad Solutions in terms of $\zeta = \frac{1+\sqrt{3}\,i}{2} = e^\frac{2\pi i}{6}$ of the system ~\ref{twodiceequations} 
{\renewcommand{\arraystretch}{1.3}\setlength{\tabcolsep}{0.2em}\small \begin{tabular}{>{$}c<{$}  >{$}c<{$} >{$}c<{$} >{$}c<{$} >{$}c<{$} >{$}c<{$} c >{$}c<{$} >{$}c<{$} >{$}c<{$} >{$}c<{$} >{$}c<{$} >{$}c<{$}}
p_1&  p_2&  p_3&  p_4&  p_5&  p_6&  & q_1&  q_2&  q_3&  q_4&  q_5&  q_6  \\
\frac{1}{6}&\frac{1}{6}&\frac{1}{6}&\frac{1}{6}&\frac{1}{6}&\frac{1}{6}&&\frac{1}{6}&\frac{1}{6}&\frac{1}{6}&\frac{1}{6}&\frac{1}{6}&\frac{1}{6}\\
\frac{1}{2}&\frac{-1}{2}&\frac{1}{2}&\frac{1}{2}&\frac{-1}{2}&\frac{1}{2}&&\frac{1}{18}&\frac{1}{6}&\frac{5}{18}&\frac{5}{18}&\frac{1}{6}&\frac{1}{18}\\
\frac{-1}{6}&\frac{-4\zeta+1}{6}&\frac{-4\zeta+7}{6}&\frac{4\zeta+3}{6}&\frac{4\zeta-3}{6}&\frac{-1}{6}&&\frac{-1}{6}&\frac{4\zeta-3}{6}&\frac{4\zeta+3}{6}&\frac{-4\zeta+7}{6}&\frac{-4\zeta+1}{6}&\frac{-1}{6}\\
\frac{-\zeta-1}{9}&\frac{-5\zeta+4}{9}&\frac{-\zeta+5}{9}&\frac{\zeta+4}{9}&\frac{5\zeta-1}{9}&\frac{\zeta-2}{9}&&\frac{\zeta-2}{12}&\frac{2\zeta-1}{4}&\frac{5\zeta+5}{12}&\frac{-5\zeta+10}{12}&\frac{-2\zeta+1}{4}&\frac{-\zeta-1}{12}\\
\frac{-\zeta}{3}&\frac{-\zeta+3}{3}&\frac{3\zeta-2}{3}&\frac{-3\zeta+1}{3}&\frac{\zeta+2}{3}&\frac{\zeta-1}{3}&&\frac{\zeta-1}{12}&\frac{4\zeta-1}{12}&\frac{3\zeta+4}{12}&\frac{-3\zeta+7}{12}&\frac{-4\zeta+3}{12}&\frac{-\zeta}{12}\\
\frac{-\zeta-1}{12}&\frac{-2\zeta+1}{4}&\frac{-5\zeta+10}{12}&\frac{5\zeta+5}{12}&\frac{2\zeta-1}{4}&\frac{\zeta-2}{12}&&\frac{\zeta-2}{9}&\frac{5\zeta-1}{9}&\frac{\zeta+4}{9}&\frac{-\zeta+5}{9}&\frac{-5\zeta+4}{9}&\frac{-\zeta-1}{9}\\
\frac{-\zeta}{6}&\frac{-\zeta+1}{3}&\frac{-\zeta+3}{6}&\frac{\zeta+2}{6}&\frac{\zeta}{3}&\frac{\zeta-1}{6}&&\frac{\zeta-1}{6}&\frac{\zeta}{3}&\frac{\zeta+2}{6}&\frac{-\zeta+3}{6}&\frac{-\zeta+1}{3}&\frac{-\zeta}{6}\\
\frac{-2\zeta+1}{6}&\frac{1}{2}&\frac{2\zeta-1}{6}&\frac{-2\zeta+1}{6}&\frac{1}{2}&\frac{2\zeta-1}{6}&&\frac{2\zeta-1}{18}&\frac{4\zeta+1}{18}&\frac{2\zeta+5}{18}&\frac{-2\zeta+7}{18}&\frac{-4\zeta+5}{18}&\frac{-2\zeta+1}{18}\\
\frac{-2\zeta+1}{9}&\frac{-\zeta+2}{9}&\frac{-2\zeta+4}{9}&\frac{2\zeta+2}{9}&\frac{\zeta+1}{9}&\frac{2\zeta-1}{9}&&\frac{2\zeta-1}{12}&\frac{\zeta}{4}&\frac{\zeta+4}{12}&\frac{-\zeta+5}{12}&\frac{-\zeta+1}{4}&\frac{-2\zeta+1}{12}\\
\frac{-\zeta+1}{3}&\frac{\zeta}{3}&\frac{-\zeta+1}{3}&\frac{\zeta}{3}&\frac{-\zeta+1}{3}&\frac{\zeta}{3}&&\frac{\zeta}{12}&\frac{2\zeta+1}{12}&\frac{\zeta+3}{12}&\frac{-\zeta+4}{12}&\frac{-2\zeta+3}{12}&\frac{-\zeta+1}{12}\\
\frac{-\zeta+2}{3}&\frac{\zeta-1}{1}&\frac{-5\zeta+4}{3}&\frac{5\zeta-1}{3}&\frac{-\zeta}{1}&\frac{\zeta+1}{3}&&\frac{\zeta+1}{36}&\frac{2\zeta+5}{36}&\frac{\zeta+10}{36}&\frac{-\zeta+11}{36}&\frac{-2\zeta+7}{36}&\frac{-\zeta+2}{36}\\
\frac{-2\zeta+1}{12}&\frac{-\zeta+1}{4}&\frac{-\zeta+5}{12}&\frac{\zeta+4}{12}&\frac{\zeta}{4}&\frac{2\zeta-1}{12}&&\frac{2\zeta-1}{9}&\frac{\zeta+1}{9}&\frac{2\zeta+2}{9}&\frac{-2\zeta+4}{9}&\frac{-\zeta+2}{9}&\frac{-2\zeta+1}{9}\\
\frac{-\zeta+1}{4}&\frac{1}{4}&\frac{\zeta}{4}&\frac{-\zeta+1}{4}&\frac{1}{4}&\frac{\zeta}{4}&&\frac{\zeta}{9}&\frac{\zeta+1}{9}&\frac{\zeta+2}{9}&\frac{-\zeta+3}{9}&\frac{-\zeta+2}{9}&\frac{-\zeta+1}{9}\\
\frac{-\zeta+1}{6}&\frac{-\zeta+1}{6}&\frac{1}{3}&\frac{1}{3}&\frac{\zeta}{6}&\frac{\zeta}{6}&&\frac{\zeta}{6}&\frac{\zeta}{6}&\frac{1}{3}&\frac{1}{3}&\frac{-\zeta+1}{6}&\frac{-\zeta+1}{6}\\
\frac{-\zeta+2}{6}&\frac{0}{1}&\frac{\zeta+1}{6}&\frac{-\zeta+2}{6}&\frac{0}{1}&\frac{\zeta+1}{6}&&\frac{\zeta+1}{18}&\frac{\zeta+1}{9}&\frac{\zeta+4}{18}&\frac{-\zeta+5}{18}&\frac{-\zeta+2}{9}&\frac{-\zeta+2}{18}\\
\frac{1}{3}&\frac{-\zeta}{3}&\frac{2\zeta}{3}&\frac{-2\zeta+2}{3}&\frac{\zeta-1}{3}&\frac{1}{3}&&\frac{1}{12}&\frac{\zeta+2}{12}&\frac{\zeta+2}{12}&\frac{-\zeta+3}{12}&\frac{-\zeta+3}{12}&\frac{1}{12}\\
\frac{\zeta+1}{3}&\frac{-\zeta}{1}&\frac{5\zeta-1}{3}&\frac{-5\zeta+4}{3}&\frac{\zeta-1}{1}&\frac{-\zeta+2}{3}&&\frac{-\zeta+2}{36}&\frac{-2\zeta+7}{36}&\frac{-\zeta+11}{36}&\frac{\zeta+10}{36}&\frac{2\zeta+5}{36}&\frac{\zeta+1}{36}\\
\frac{-\zeta}{12}&\frac{-4\zeta+3}{12}&\frac{-3\zeta+7}{12}&\frac{3\zeta+4}{12}&\frac{4\zeta-1}{12}&\frac{\zeta-1}{12}&&\frac{\zeta-1}{3}&\frac{\zeta+2}{3}&\frac{-3\zeta+1}{3}&\frac{3\zeta-2}{3}&\frac{-\zeta+3}{3}&\frac{-\zeta}{3}\\
\frac{-2\zeta+1}{18}&\frac{-4\zeta+5}{18}&\frac{-2\zeta+7}{18}&\frac{2\zeta+5}{18}&\frac{4\zeta+1}{18}&\frac{2\zeta-1}{18}&&\frac{2\zeta-1}{6}&\frac{1}{2}&\frac{-2\zeta+1}{6}&\frac{2\zeta-1}{6}&\frac{1}{2}&\frac{-2\zeta+1}{6}\\
\frac{-\zeta+1}{6}&\frac{1}{3}&\frac{\zeta}{6}&\frac{-\zeta+1}{6}&\frac{1}{3}&\frac{\zeta}{6}&&\frac{\zeta}{6}&\frac{1}{3}&\frac{-\zeta+1}{6}&\frac{\zeta}{6}&\frac{1}{3}&\frac{-\zeta+1}{6}\\
\frac{-\zeta+1}{9}&\frac{-\zeta+2}{9}&\frac{-\zeta+3}{9}&\frac{\zeta+2}{9}&\frac{\zeta+1}{9}&\frac{\zeta}{9}&&\frac{\zeta}{4}&\frac{1}{4}&\frac{-\zeta+1}{4}&\frac{\zeta}{4}&\frac{1}{4}&\frac{-\zeta+1}{4}\\
\frac{-\zeta+2}{9}&\frac{\zeta+1}{9}&\frac{-\zeta+2}{9}&\frac{\zeta+1}{9}&\frac{-\zeta+2}{9}&\frac{\zeta+1}{9}&&\frac{\zeta+1}{12}&\frac{1}{4}&\frac{-\zeta+2}{12}&\frac{\zeta+1}{12}&\frac{1}{4}&\frac{-\zeta+2}{12}\\
\frac{1}{3}&\frac{\zeta-1}{3}&\frac{-2\zeta+2}{3}&\frac{2\zeta}{3}&\frac{-\zeta}{3}&\frac{1}{3}&&\frac{1}{12}&\frac{-\zeta+3}{12}&\frac{-\zeta+3}{12}&\frac{\zeta+2}{12}&\frac{\zeta+2}{12}&\frac{1}{12}\\
\frac{-\zeta+1}{12}&\frac{-2\zeta+3}{12}&\frac{-\zeta+4}{12}&\frac{\zeta+3}{12}&\frac{2\zeta+1}{12}&\frac{\zeta}{12}&&\frac{\zeta}{3}&\frac{-\zeta+1}{3}&\frac{\zeta}{3}&\frac{-\zeta+1}{3}&\frac{\zeta}{3}&\frac{-\zeta+1}{3}\\
\frac{-\zeta+2}{12}&\frac{1}{4}&\frac{\zeta+1}{12}&\frac{-\zeta+2}{12}&\frac{1}{4}&\frac{\zeta+1}{12}&&\frac{\zeta+1}{9}&\frac{-\zeta+2}{9}&\frac{\zeta+1}{9}&\frac{-\zeta+2}{9}&\frac{\zeta+1}{9}&\frac{-\zeta+2}{9}\\
\frac{-\zeta+2}{18}&\frac{-\zeta+2}{9}&\frac{-\zeta+5}{18}&\frac{\zeta+4}{18}&\frac{\zeta+1}{9}&\frac{\zeta+1}{18}&&\frac{\zeta+1}{6}&\frac{0}{1}&\frac{-\zeta+2}{6}&\frac{\zeta+1}{6}&\frac{0}{1}&\frac{-\zeta+2}{6}
\end{tabular}
}
\end{center}
\end{table}
We set up checks of
Table~\ref{twodicefiftyone}
in several tools but only one calculation completed. The open source package \texttt{Bertini}~\cite{Bertini} [which uses homtotopy continuation algorithms] impressed us by finding numerical approximations~\calcpagecite{2.bertini.htm} to all $51$ solutions.  

Table~\ref{twodicefiftyone} reproves Proposition \ref{crapsfairno} revealing a bit more geometry than the proof by elimination. It piqued our interest in what happens for dice with any number of sides, and suggested that the natural context for answering these questions needed to allow arbitrary complex ``probabilities''.
\fi

\section{The total-to-parts problem}\label{totalpartssetup}
After establishing some results for small examples computationally,  we formulated a much more general problem in the hopes of finding a unifying theory. We begin in \subsec{defnot} by making some preliminary definitions and establishing notation that will be used throughout the sequel. This leads to \subsec{totalparts} where we state the general problem and some attractive special cases and review what we now know about their answers. 
  In \subsec{geometric}, we describe a more geometric way of thinking about these problems that suggests further variations on these questions and, for the reader with no exposure to algebraic geometry, give some elementary examples that we hope make clear the notions that we draw on.

\subsection{Definitions and notation}\label{defnot}

We will index the sides of dice by $\sq{k} := \{0, 1, \ldots, k-1\}$ and \emph{sacks} or collections of dice by $\ang{n} := \{1, 2, \ldots, n\}$. Numbering sides from $0$ rather than $1$ makes possible uniform indexing and simpler formulae for total distributions, for example, in (\ref{ft}) and (\ref{multpolyformula}). 

By a \emph{strict die} $\dd = (d_0, d_1, \ldots, d_{k-1})$ of \emph{order} $k$, we mean a probability space whose underlying set is $\sq{k}$ (indexed throughout by $i$) with probability distribution given by $\PPP(i) = d_i$, subject, of course, to the standard conditions $d_i \in \RR$, $d_i \ge 0$ for $ i\in \sq{k}$, and $ \sum_{i\in \sq{k}}d_i =1$.
The \emph{fair} $k$-die has the uniform distribution  with all $d_i = \frac{1}{k}$; any other $k$-die is \emph{unfair} or \emph{loaded}. 
It will be convenient to consider  \emph{$k$-pseudodice} $\pp = (p_0, p_1, \ldots, p_{k-1})$ for which the $p_i$ are pseudoprobabilities allowed to take arbitrary \emph{complex} values and required to satisfy only $\sum_{i\in \sq{k}}p_i =1$. See Remark~\ref{sumone}, for an explanation of why we retain only the ``sum to $1$'' condition for a probability distribution.

We will continue to denote by $\dd$ a die known to be strict and by $\pp$ a general pseudodie, and when no confusion is likely, we omit the qualifiers ``strict'' and ``pseudo''.  A \emph{coin} is a die of order $2$ and a die is \emph{real} if all its probabilities are real numbers.	

We define a \emph{sack} $\SS:= (\pp^1, \pp^2, \dots, \pp^n)$ to be a finite ordered set of dice (indexed throughout by $j \in \ang{n}$). We call the list $\kk_S := (k_1, k_2, \ldots k_n)$ of orders of these dice the \emph{type} of $S$ and call the probability distributions of the individual dice in $\SS$ (in order) its \emph{part} probabilities. To the sack $\SS$, we associate the product sample space $K_\SS := \prod_{j \in \ang{n}} \sq{k_j}$ (indexed throughout by $\ii$) of order $k_\SS := \prod_{j \in \ang{n}} k_j$. If $\ii = (i_1, i_2, \ldots, i_n) \in K_\SS$, we define the \emph{total} $t_{\ii} := \sum_{j=1}^n i_j$. If we set $T_\SS := \sum_{j \in \ang{n}} (k_j-1)$, this allows us to define a partition of $K_\SS$ by the sets $K_t := \{\ii \in I~|~ t_{\ii} = t\}$ for $t =0, 1, \dots,  T_\SS$. When $\SS$ is understood, we often omit it as a subscript and, likewise, we often omit the modifiers total or part when no confusion can result.

We will always assume that the dice in $\SS$ are \emph{independent}. For strict sacks, this means that we have the usual product formula for the probability of the outcome $\ii$ when the dice in the sack $\SS$ are rolled, 
$\PPP(\ii) = \prod_{j \in \ang{n}} p_{i_j}^j$,
and we use this formula to define $\PPP(\ii)$ for any sack. 

We can then define the $\thst{t}{th}$-total probability $f_t$ of $\SS$ to be the (pseudo)probability of observing a total of $t$ when the dice in the sack $\SS$ are rolled. 
\begin{equation}\label{ft}
f_t = \sum_{\ii \in K_t} \PPP(\ii) = \sum_{\ii \in K_t} \Bigl(\prod_{j \in \ang{n}} p_{i_j}^j\Bigr)\,. 
\end{equation}
and define the \emph{total probability distribution} $\ff_{\SS} := (f_0, f_1, \cdots, f_T)$. 

We call this total distribution \emph{fair} if it equals the total distribution on the sack of fair dice having the same orders as those in $\SS$ and, if so, we call $\SS$ (totally) \emph{fair}. Such a sack is \emph{exotic} if at least one of its dice is unfair. It is easy to check that if a strict sack of dice is totally fair, then so is the subsack obtained by removing any fair dice. Hence, any fair sack of dice contains a distinguished exotic subsack consisting of its unfair dice.

The \emph{reverse} of a die $\dd$ is the die $\dd'$ whose probability vector is that of $\dd$ in reverse order. If we reverse all the dice in a sack, we get a reverse sack whose total probability vector is also reversed. A die is \emph{palindromic} if it equals its reverse, and a sack is palindromic if its total probability distribution is. A palindromic sack may contain non-palindromic dice: see Example~\ref{cointhree}.

\subsection{The total-to-parts question and special cases of it}\label{totalparts}

We can now state the most general problem we seek to investigate here, and strict and fair special cases, the last of which motivates this paper. As we will see in the sequel, the flavor of the fair version is arithmetic, of the strict version somewhat more probabilistic, and of the general version algebro-geometric. 

\begin{ttsque}[General]\label{generalquestion}
When does the total probability distribution on a sack of dice determine the part probabilities of the dice, up to permutation?
\end{ttsque}

\begin{ex}\label{generalpairofcoins} To get a feel for this question, let's work out the generalization of a pair of coins from Example~\ref{twocoinsfair}. The general total distribution is given by numbers $r$, $s$ and $t$ summing to $1$, and the probabilities $p$ and $q$  of heads on the two coins must then satisfy
\begin{displaymath}
\begin{array}{rcl}
pq & = & r\\
p(1-q)+(1-p)q & = & s\\
(1-p)(1-q) & = &  t\\
\end{array}
\end{displaymath}
From the first equation, we get $q=\frac{r}{p}$, and eliminating $q$ from  second gives $p^2-(2r+s)p-r=0$. Using $r+s+t=1$, this has discriminant $D=s^2-4rt$ so $p = \frac{(2r+s)\pm \sqrt{D}}{2}$. These roots are swapped when the coins are, so the answer to Question~\ref{generalquestion} for two coins is yes.
\end{ex}

It is not hard to show that the answer to Question~\ref{generalquestion} is, in general, no (see Example~\ref{cointhree}). In the special case of sacks of any number of coins treated in \sect{coins}, the answer is positive. We will also see in Corollary~\ref{generalpartstototal} that this holds in general, \emph{only} for sacks of coins (and, of course, singleton sacks).

\begin{ttsque}[Strict]\label{strictquestion}
Does the probability distribution on a strict sack of dice determine the probabilities of the dice, up to permutation?
\end{ttsque}

In general, the answer is no---again, see Example~\ref{cointhree}. Indeed, the answer is positive only when the answer to \question{generalquestion} is. Much more subtle, as we shall see, is the following question which turns out to have an arithmetic flavor.

\begin{ttsque}[Fair]\label{fairquestion}
If the total probability distribution on a strict sack of dice is fair, must all the dice be fair? Contrapositively, do there exist exotic sacks of dice?
\end{ttsque}

Specializing to a sack of two $6$ sided dice, we get the question in the title of the paper.  We confirm the negative answers of Example~\ref{twosixsideddice} and 
\ifshrink
\calcpagecite{2.solutions}
\else
Table~\ref{twodicefiftyone}
\fi
by a more elegant method in Corollary~\ref{crapsfairnobis}. We also show that there are no exotic sacks consisting of a coin and a die (see \subsec{coindiefair} and Remark~\ref{coindiebis}). 

Computer calculations reviewed in \sect{exoticsacks} show that totally fair sacks of two dice of certain small orders must be fair. In the other direction, for most pairs of orders, exotic sacks exist and the number of such sacks gets large as the orders do. Historically, the first we found was a pair of $13$-sided dice, but there are even simpler examples (see Table~\ref{decatab}).  These examples make it clear how to produce exotic sacks with more dice and lead to some intriguing asymptotic conjectures. 

\subsection{Geometric reformulations and variants}\label{geometric}

Fix a type $\kk= (k_1, k_2, \ldots, k_n)$ of sack and set $U := \sum_{j \in \ang{n}} (k_j)$ and $T := \sum_{j \in \ang{n}} (k_j-1) = U-n$. Then we can take $p^j_i$ for $ j \in \ang{n}$ and $ i \in \sq{k_j}$ as coordinates on $\CCC^U$, and use the $n$ independent equations ${\sum_{i\in \sq{k_j}}p^j_i =1}$ to define a linear subspace $\VVV \isom \CCC^T$. Equivalently, we can use the $p^j_i$ for $ i \in \sq{k_j-1}$ as coordinates on $\CCC^T$ and use these equations to implicitly define~$p^j_{k_j}$. 

Thus sacks $\SS:= (\pp^1, \pp^2, \dots, \pp^n)$ of type $\kk$ can be identified with points of  either $\VVV$ or of $\CCC^T$ [or, more algebraically, of the affine variety $\AAA^T$].  Equation~\eqref{ft} can then be viewed as defining a part-to-total mapping $\FF_{\kk}: \VVV \to \CCC^T$ or $\FF_{\kk}: \CCC^T \to \CCC^T$ by $\SS = (\pp^1, \pp^2, \dots, \pp^n)\,\to\, \ff_\SS$. Since the source and target are of the same dimension, we expect the map $\FF_\kk$ to be \emph{finite}. 

Finiteness is a geometer's term that means that for \emph{general} points $\ff$, the number of solutions of the equations $\FF_{\kk}(\SS) = \ff$---the order of the \emph{fiber} $\ff^{-1}(\{\ff\})$ of $\FF_{\kk}$ over $\ff$---has a common finite value called the \emph{degree} of $\FF_\kk$. Here ``general'' is a somewhat vague but very convenient notion, indicating that there is a non-zero polynomial vanishing on the complementary set of ``special'' $\ff$.  The special fibers may be infinite, or empty, or contain a smaller finite number of points [at each of which, typically, several points of nearby general fibers come together or \emph{ramify} (cf. Example~\ref{symex} and Remark~\ref{diceramification})]. Milne's lecture notes~\cite{Milne} provide an accessible account of finiteness (Chapter~8) and of prerequisite notions (Chapters~1-7).

\begin{ex}\label{symex} A model example is the map $\FF:\CCC^n \to \CCC^n$ sending $\pp := (p_1, \ldots p_m)$ to $\ee= (e_1, \ldots, e_n)$ where the $p_j$ are the roots \emph{in a fixed order} of a monic complex polynomial $P(x)$ of degree $n$ and the $e_j$ are the coefficients of $P(x)$ in degree order: 
$$P(x) = \prod_{j=1}^n (x-p_j) = x^n + \sum_{i=1}^{n} e_ix^{n-i}\,.$$ 
Here $e_i$ is the \thst{i}{th}-elementary symmetric function of the  $p_j$: that is, the sum of the $\binom{n}{i}$ squarefree monomials of degree $i$ in the $p_j$. The fundamental theorem of symmetric polynomials~\cite{Cox}*{Theorem~2.2.7} implies that the map $\FF$ is surjective of degree $n!$ and that the fiber containing $\pp$ is obtained by permuting the $p_j$. In particular, every fiber is finite (and non-empty). Thus, $\ee$ general means that $P(x)$ has distinct roots or equivalently that the degree $n$ discriminant~~\cite{Lang}*{pp.161--162}  is non-zero.
\end{ex}

\begin{ex}\label{finiteex} The map $f:\CCC^3 \to \CCC^3$ given by $f(x,y,z) = (a,b,c) := (yz,xz,xy)$ has degree $2$. When $xyz\not=0$, $y = \frac{c}{x}$ and $z= \frac{b}{x}$ so $a= yz =  \frac{bc}{x^2}$ and hence $x = \pm \sqrt{\frac{bc}{a}}$. Fixing the sign then determines $y$ and $z$ by symmetry. However, the fiber over $(0,0,0)$ is the union of the $x$, $y$ and $z$ axes. If $a$ is non-zero, then the fiber over $(a,0,0)$ is the hyperbola $yz=a$ in the plane $x=0$ and, if $b$ is also non-zero, then the fiber $(a, b, 0)$ is empty, since $c=0$ forces $x$ or $y$ (and hence $b$ or $a$) to be $0$ too.
\end{ex}

A weaker version of~\question{generalquestion} asks whether finiteness always holds here.

\begin{ttsque}[Finite map]\label{finitequestion}
Are the part-to-total maps always finite? If so, what is the degree $\deg(\FF_\kk)$ and what are the special fibers?
\end{ttsque}

A positive answer to this question allows, as in Example~\ref{finiteex}, for the collection of sacks $\SS'$ having the same total distribution as $\SS$ to be infinite, but asserts that for general $\SS$ this collection is finite. A stronger version, though one that unlike~\question{generalquestion} allows for $\SS'$ that are not reorderings of $\SS$, asks whether we have a picture like that in Example~\ref{symex} with \emph{all} fibers finite and non-empty.

\begin{ttsque}[Non-empty finite fibers]\label{strongfinitequestion}
Do the equations $\FF_\kk(\SS) = \ff$ always have only a non-zero, finite number of solutions?
\end{ttsque}

\begin{rem}\label{pairofcoinsrem} Example~\ref{generalpairofcoins} shows that for a pair of coins, the answer to these questions is positive: the degree is $2$ with the total distribution determining the pair of dice up to order, and over the discriminant locus defined by the vanishing of $s^2-4rt$ we have a single pair of identical dice with head probabilities $p = \frac{2r+s}{2}$ (cf. Example~\ref{twocoinsfair}).
\end{rem}
In Theorem~\ref{strongfinitethm}, we show that the answer to \question{strongfinitequestion} (and, \textit{a fortiori}, to \question{finitequestion}) is positive in general and, in Corollary~\ref{degpartstototal}, give a multinomial formula for  $\deg(\FF_\kk)$.

\section{Sacks of coins}\label{coins}
\stepcounter{subsection}

This section should, historically, fall between \subsec{totalparts} and \subsec{geometric}: after making the extensions in the former, we studied the simplest examples, sacks of $n$ coins, which we were able to completely resolve by straightforward combinatorial arguments, and it was this solution that led to the geometric reformulation of the latter. We moved this argument back because it builds on Examples~\ref{generalpairofcoins} and~\ref{symex}.

For a sack of $n$ coins, the sample space $K_\SS$ is simply the power set of $\ang{n}$, and the total $t$ of any roll (or toss) $\ii$ is simply the number of $j$ for which  the index $i_j$ equals $1$, so the set $K_t$ of rolls with total $t$ can be identified with subsets $J$ of $\ang{n}$ of order $t$. We can further simplify notation by writing $p_j$ for $p^j_0$ and replacing $p^j_1$ by $(1-p_j)$.  Making these substitutions, equation~\eqref{ft} then becomes
\[
f_t = \sum_{\substack{J\subset \ang{n}\\ |J|=t~}}~ \biggl[\Bigl(\prod_{j\in J} p_j\Bigr) \Bigl(\prod_{j\not\in J} (1- p_j)\Bigr)\biggr].
\]

\begin{prop} Let $\ee_{\SS} = (e_o,e_1, \ldots, e_n)$ where $e_i $ is the $\thst{i}{th}$ elementary function of the probabilities $p_j$ (cf. Example~\ref{symex}). Then:
	\begin{enumerate}
		\item For $ t \in \sq{t}$,  $f_t = {\ds{\sum_{k=t}^n}} (-1)^{k-t}~{\binom{k}{t}}~ e_k$.
		\item For $j \in \ang{n}$, $\sspan \{ f_n, f_{n-1}, \ldots,  f_{j}\} = \sspan \{ e_n, e_{n-1}, \ldots,  e_{j}\}$.
	\end{enumerate}
\end{prop}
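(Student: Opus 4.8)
The plan is to prove part (1) by a direct binomial expansion and a regrouping of monomials, and then to deduce part (2) at once from the triangular shape of the resulting formula.

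For (1), I would begin from the displayed formula $f_t=\sum_{|J|=t}\bigl(\prod_{j\in J}p_j\bigr)\bigl(\prod_{j\notin J}(1-p_j)\bigr)$ and expand each tail factor by $\prod_{j\notin J}(1-p_j)=\sum_{L\subseteq\ang{n}\setminus J}(-1)^{|L|}\prod_{j\in L}p_j$. This presents $f_t$ as a signed sum of squarefree monomials $\prod_{j\in M}p_j$ indexed by pairs $(J,L)$ with $|J|=t$, $L\subseteq\ang{n}\setminus J$, and $M:=J\sqcup L$. The crucial step is then to collect terms by their common support $M$: if $|M|=k$ (so necessarily $k\ge t$), the pairs $(J,L)$ that produce the monomial $\prod_{j\in M}p_j$ are exactly the choices of a $t$-element subset $J\subseteq M$, of which there are $\binom{k}{t}$, with $L=M\setminus J$ forced and of size $k-t$; hence that monomial occurs with total coefficient $(-1)^{k-t}\binom{k}{t}$. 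Since $\sum_{|M|=k}\prod_{j\in M}p_j=e_k$ by definition of the elementary symmetric functions, summing over $k$ from $t$ to $n$ yields $f_t=\ds{\sum_{k=t}^n}(-1)^{k-t}\binom{k}{t}e_k$, which is (1). The only point requiring care is the sign-and-multiplicity bookkeeping in this regrouping; everything else is mechanical.

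For (2), I would read (1) as saying that for each $t$ the coefficient of $e_t$ in $f_t$ equals $\binom{t}{t}=1$ and that no $e_k$ with $k<t$ appears, i.e. $f_t=e_t+(\text{a linear combination of }e_{t+1},\dots,e_n)$. Thus $f_t\in\sspan\{e_t,\dots,e_n\}\subseteq\sspan\{e_j,\dots,e_n\}$ for every $t$ with $j\le t\le n$, which gives the inclusion $\sspan\{f_n,\dots,f_j\}\subseteq\sspan\{e_n,\dots,e_j\}$. For the reverse inclusion I would use downward induction on $t$ from $n$ to $j$: the base case is $e_n=f_n$, and in general (1) rearranges to $e_t=f_t-\ds{\sum_{k=t+1}^n}(-1)^{k-t}\binom{k}{t}e_k$, where each $e_k$ with $k>t$ lies in $\sspan\{f_{t+1},\dots,f_n\}$ by the inductive hypothesis, whence $e_t\in\sspan\{f_t,f_{t+1},\dots,f_n\}$. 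Equivalently, one notes that the square matrix expressing $(f_n,f_{n-1},\dots,f_j)$ in terms of $(e_n,e_{n-1},\dots,e_j)$ is triangular with unit diagonal, hence invertible, so the two families span the same subspace. Either way this step is immediate once (1) is in hand, so the substance of the proposition is entirely the combinatorial identity (1); I expect that identity's regrouping to be the only real obstacle, and a mild one at that.
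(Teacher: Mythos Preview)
Your proof is correct and follows essentially the same approach as the paper. For (1) both expand the tail product $\prod_{j\notin J}(1-p_j)$ and collect squarefree monomials by degree; the paper first invokes symmetry to conclude the degree-$k$ part is $c\,e_k$ and then computes $c=(-1)^{k-t}\binom{n}{t}\binom{n-t}{k-t}/\binom{n}{k}$ before simplifying (with the same bijective argument you use implicitly), whereas you go straight to counting pairs $(J,L)$ with fixed support $M$, which is a bit more direct but amounts to the same combinatorics. For (2) both observe that (1) exhibits a unitriangular change of basis and conclude invertibility immediately.
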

\begin{proof} The first claim follows directly from the definition of $f_t$ as follows. Fix any of the $\binom{n}{t}$ terms $P$ in the sum defining $f_t$. The second product in $P$ is a binomial of total degree $(n-t)$ containing $\binom{n-t}{k-t}$ ``interior'' terms of $p$-degree $(k-t)$ and sign $(-1)^{k-t}$. Multiplying these degree $(k-t)$ ``interior'' terms by the first product in $P$ and summing over $P$ gives a squarefree degree $k$ homogeneous symmetric polynomial in the $p_j$, which must therefore be of the form $c e_k$. Since $e_k$ contains $\binom{n}{k}$ terms, we find that 
\[
c = (-1)^{k-t}\left(\frac{\binom{n}{t}\cdot \binom{n-t}{k-t}}{\binom{n}{k}}\right)\,.
\]
A straightforward calculation shows that this simplifies to $c=(-1)^{k-t}\binom{k}{t}$ but there is also a nice bijective argument for this. The denominator counts choices of a $k$ element subset and the numerator choices of such a subset in $2$ stages, by first choosing $t$ of the elements and then the other $(k-t)$. In this second count, each underlying $k$-set arises once for each of its $t$ element subsets.   

The first claim shows that $\ff_{\SS} = U\ee_{\SS}$ where $U$ is an upper-triangular matrix that has ones on the diagonal. As $U$ is invertible, the second claim is immediate.
\end{proof}

The second claim of the proposition shows that $\ff_{\SS}$ determines $\ee_{\SS}$. But the elementary symmetric polynomials generate the ring of all symmetric polynomials (cf. Examples~\ref{symex}), so $\ee_{\SS}$ determines the $p_j$, up to permutation, and hence $\SS$ up to permutation of the dice. 

\begin{cor}\label{generalcoins} The total distribution of any sack of coins determines the part distributions of the coins, up to permutation. In other words, when all $k_j$ equal $2$, the answer to~\question{generalquestion} is positive.
\end{cor}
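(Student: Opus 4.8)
The plan is to read the corollary off from the preceding Proposition, with one extra invocation of the fundamental theorem of symmetric polynomials. Adopt the simplified notation used just above the Proposition, so that coin $j$ has head-probability $p_j := p^j_0$ and tail-probability $1-p_j$, and let $\ee_\SS = (e_0, e_1, \ldots, e_n)$ be the vector of elementary symmetric functions of $p_1, \ldots, p_n$. Part~(1) of the Proposition expresses each total probability $f_t$ as an explicit $\QQ$-linear combination of $e_t, e_{t+1}, \ldots, e_n$; in matrix form this reads $\ff_\SS = U\,\ee_\SS$ with $U = \bigl( (-1)^{k-t}\binom{k}{t} \bigr)_{0 \le t, k \le n}$.

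First I would note that $U$ is upper-triangular with every diagonal entry $\binom{t}{t} = 1$, hence invertible over $\QQ$ (there is even a tidy closed form for $U^{-1}$, but it is not needed). Consequently the total distribution $\ff_\SS$ and the symmetric-function vector $\ee_\SS$ of a sack of $n$ coins carry exactly the same information: knowing either one is equivalent to knowing the other.

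Next I would invoke the fundamental theorem of symmetric polynomials, exactly as recalled in Example~\ref{symex}: the numbers $e_1, \ldots, e_n$ are, up to sign, the coefficients of the monic polynomial $\prod_{j=1}^n (x - p_j)$, so they determine the \emph{multiset} $\{p_1, \ldots, p_n\}$ and therefore the sack $\SS$ up to a permutation of its coins. Combining this with the previous step, the total distribution of a sack of $n$ coins determines its part distributions up to permutation, which is the positive answer to \question{generalquestion} in the case where all $k_j = 2$.

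There is essentially no obstacle left once the Proposition is in hand: the only points to watch are the (immediate) unitriangularity of $U$ and the fact that the fundamental theorem returns an unordered collection of roots rather than an ordered tuple — which is precisely why the conclusion is only ``up to permutation,'' and why one should not expect more for a sack of more than one coin. Had the Proposition not been available, the genuine work would have been its part~(1), i.e. the combinatorial identity collecting the ``interior'' terms of the products $\prod_{j \notin J}(1-p_j)$ into $(-1)^{k-t}\binom{k}{t}\, e_k$, which is handled there by a short bijective counting argument.
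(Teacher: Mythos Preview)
Your proof is correct and follows essentially the same route as the paper: the paper derives the corollary from part~(2) of the Proposition---which was itself proved by observing that the matrix $U$ in $\ff_\SS = U\ee_\SS$ is unitriangular---and then invokes Example~\ref{symex}, exactly as you do. The only cosmetic difference is that you appeal directly to part~(1) and the invertibility of $U$, rather than passing through the span statement in part~(2).
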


\section{A coin and a die}\label{coindie}

For a sack consisting of a coin and a die, we can eliminate the die probabilities to obtain an equation for the coin probability.

\subsection {The fair case}\label{coindiefair}

\begin{lem}\label{coindielem} If a coin and a die have fair total distribution, then both are fair.
\end{lem}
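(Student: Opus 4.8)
The plan is to repackage the part and total distributions as generating polynomials and then observe that the fair total polynomial admits essentially only one factorization of the required shape. Write the coin as $C(x) := p + (1-p)x$, where $p$ is its probability of showing $0$, and the die of order $k$ as $D(x) := \sum_{i=0}^{k-1} d_i x^i$. Because the two are independent, the product formula \eqref{ft} says that the total distribution of the sack is recorded by the coefficients of $C(x)D(x)$. On the other hand, the fair total distribution on a coin-and-$k$-die sack is $\bigl(\frac{1}{2k},\frac1k,\dots,\frac1k,\frac{1}{2k}\bigr)$, with generating polynomial $G(x) := \frac{1}{2k}(1+x)(1+x+\cdots+x^{k-1})$ since $(1+x)(1+x+\cdots+x^{k-1}) = 1 + 2x + \cdots + 2x^{k-1} + x^k$. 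So the hypothesis is exactly the identity $C(x)D(x) = G(x)$ in $\CCC[x]$, and in fact in $\RR[x]$ because the sack is strict.

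Next I would show the coin is fair. Since $\deg G = k$ while $\deg D \le k-1$, the factor $C$ cannot be constant, so $1-p \ne 0$; and comparing constant terms gives $p\,d_0 = \frac{1}{2k} \ne 0$, so $p \ne 0$ (and $d_0 \ne 0$). Hence $C(x) = (1-p)\bigl(x - x_0\bigr)$ with $x_0 := \frac{p}{p-1}$ a \emph{real} number, and evaluating the identity at $x_0$ gives $G(x_0) = C(x_0)D(x_0) = 0$: that is, $x_0$ is a real root of $G$. But the roots of $1+x+\cdots+x^{k-1}$ are the $k$-th roots of unity other than $1$, none of which is real except $-1$; so the only real root of $G$ is $-1$. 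Therefore $x_0 = -1$, i.e. $\frac{p}{p-1} = -1$, which forces $p = \frac12$, so the coin is fair.

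Finally, with $C(x) = \frac12(1+x)$ now forced, I divide: $D(x) = G(x)/C(x) = \frac1k(1 + x + \cdots + x^{k-1})$, so $d_i = \frac1k$ for every $i$ and the die is fair as well, completing the proof.

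The argument is genuinely short, and the only point that needs a word of care is the case of even $k$, where $-1$ is a \emph{double} root of $G$: this creates no ambiguity, because the linear factor $C$ can absorb only one copy of $(x+1)$, and the remaining copy is precisely what rewrites $1 + x^2 + \cdots + x^{k-2}$ back as $1 + x + \cdots + x^{k-1}$ inside $D$. So the real ``obstacle'', such as it is, reduces to the elementary observation that $1+x+\cdots+x^{k-1}$ has $-1$ as its unique real root — which is also the first glimpse of the cyclotomic factorizations flagged in the introduction.
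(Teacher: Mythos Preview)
Your proof is correct, but it does not follow the paper's argument at the lemma's location. There the authors carry out a bare-hands elimination: from the system of $k+1$ bilinear equations they solve successively for $kq_0, kq_1,\ldots, kq_{k-1}$ in terms of $p$, substitute into the last equation, and recognize the result as a $k$-term geometric progression with initial term $(p-\tfrac12)p^{k-1}$ and ratio $\tfrac{p-1}{p}$; over $\RR$ this forces $p=\tfrac12$. Your route instead packages everything via generating polynomials and reads off the conclusion from the fact that $(1+x)\psi_k(x)$ has $-1$ as its only real root.

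What this buys is clear: your argument is shorter and conceptually cleaner, and it is exactly the ``Stillman insight'' the paper unveils only in \sect{multpoly}---indeed Remark~\ref{coindiebis} is essentially your proof in two lines. The paper's order is deliberate: the elimination proof in \subsec{coindiefair} is meant to illustrate what one can do \emph{before} seeing that total distributions are polynomial products, so that the reader later appreciates the simplification. So you have not found a gap; you have simply anticipated Section~\ref{multpoly}. One small point worth keeping explicit, as you do, is that the argument uses reality of $p$ (strictness): over $\CCC$ the root $x_0$ could be any nontrivial $k^{\mathrm{th}}$ root of unity and the conclusion would fail, consistent with the non-fair complex solutions exhibited later for $k=6$.
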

\begin{proof} Writing the pseudoprobabilites of the coin as $(p,1-p)$ and of the die as $(q_0, q_1, \ldots q_{k-1})$, we can express the fairness of the total distribution by the equations 
\begin{center}
\begin{tabular}{>{$}l<{$}>{$}c<{$}>{$}l<{$}>{$}c<{$}>{$}c<{$}}
	pq_0 & & &= &\frac{1}{2k}\\
	pq_1 & + & (1-p)q_0&=  &\frac{1}{k}\\
	\kern6pt\vdots &  & \kern18pt\vdots &&\vdots\\
	pq_{k-1} & + & (1-p)q_{k-2}&= & \frac{1}{k}\\
	&  & (1-p)q_{k-1}&= & \frac{1}{2k}
\end{tabular}
\end{center}
Note that the first and last equations imply that $p$ and $1-p$ are both non-zero, which will allow us to use the equations in succession to solve for and eliminate the quantities $kq_j$. We start with $kq_0= \frac{1}{2p}$, then rewrite successive equations as $kq_j = \frac{1}{p} + \frac{p-1}{p}q_{j-1}$ to obtain, inductively, 
\[
k q_j = \frac{1}{p}+ \frac{p-1}{p^2}+ \cdots+ \frac{(p-1)^{j-1}}{p^{j}} + \frac{1}{2}\frac{(p-1)^{j}}{p^{j+1}}\,,
\]
and finally use the last equation in the form $kq_{k-1}= -\frac{1}{2(p-1)}$ to see, after clearing denominators, that $p$ satisfies the equation
\[
\frac{1}{2}p^{k}+ (p-1)p^{k-1} + \cdots + (p-1)^{k-1}p + \frac{1}{2}(p-1)^{k}\,.
\]

If we split each interior term in two equal parts $\frac{1}{2} (p-1)^jp^{k-j}$ and group one set of these halves with the first term and the other with the last, we obtain two $k$-term geometric progressions with common ratio $r=\frac{p-1}{p}$ and respective initial terms $a'=\frac{1}{2}p^k$ and $a''=\frac{1}{2}(p-1)p^{k-1}$. This equals a single progression with ratio $r$ and initial term $a= a'+a''= (p-\frac{1}{2})p^{k-1}$. Such a progression can sum to $0$ only if either $a=0$ or $r=-1$ (and $k$ is even). In either case, we must have $p =\frac{1}{2}$, so the coin is fair, and then the total distribution equations inductively yield $q_j=\frac{1}{k}$ for all $j$, so the die is fair too.
\end{proof}

\subsection {The general case}\label{coindiegeneral}

A version of the same analysis can be carried out for a coin and a die with a general total distribution to obtain an equation for $p$ in terms of the vector $(f_0, f_1, \ldots, f_k)$ of total probabilities. It shows that 
\[
p^k + \sum_{i=0}^{k-1} a_ip^i = 0 \text{\quad where \quad} 
a_i = \sum_{j=0}^{i} (-1)^{k-i} \binom{k-j}{k-i} f_j\,.
\]

We will leave the details to the interested reader because it turns out that an unfair total distribution no longer determines the parts, even in the strict case. The simplest examples---there are many, as we will see in \sect{exoticsacks}---involve a coin and a three sided die.

\begin{ex}\label{cointhree} The total distribution $(\frac{1}{9}, \frac{7}{18}, \frac{7}{18}, \frac{1}{9})$ is common to the three sacks with part distributions $(\frac{1}{2}, \frac{1}{2})$ and $(\frac{2}{9}, \frac{5}{9}, \frac{2}{9})$,  $(\frac{1}{3}, \frac{2}{3})$ and $(\frac{1}{3}, \frac{1}{2}, \frac{1}{6})$, and the reverse of the second.
\end{ex}

\section{The simplifying viewpoint}\label{multpoly}

\subsection{Stillman's observation and some immediate consequences} What Mike Stillman said about the part-to-total map $\FF: \CCC^T \to \CCC^T$ for sacks of dice of a given type $\kk$ that dropped our jaws was, ``That's polynomial multiplication''. He was seeing the problem in terms of generating functions, nicely introduced in~\cite{Concrete}*{Chapter~7}, which provide a powerful tool for assembling and relating combinatorial data. Here we associate to any pseudodie $\pp$ of order $k$ the \emph{distribution polynomial} $\pp(x) = \sum_{i=0}^{k-1} p_ix^i$. Conversely, every polynomial of degree $(k-1)$ or less (less, because we allow  $p_i=0$) with coefficients summing to $1$ is associated to a unique die of order $k$. Likewise, the total distribution of any sack $\SS$ yields a polynomial $\ff_{\SS}(x) = \sum_{i=0}^{T} f_ix^i$ of degree at most~$T$. 

A moment's inspection of the total distribution equation~(\ref{ft}) shows that it can now be restated much more concisely as:
\begin{equation}\label{multpolyformula}
	\ff_{\SS}(x) = \prod_{\pp\in \SS} \pp(x)
\end{equation}
This was Stillman's insight. From it, we immediately obtain positive answers to \question{finitequestion} and \question{strongfinitequestion}.
\begin{thm}\label{strongfinitethm} For dice of a fixed type $\kk$, all fibers of  the part-to-total map $\FF_{\kk}$ are finite.
\end{thm}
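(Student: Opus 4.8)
The plan is to exploit Stillman's reformulation in equation~\eqref{multpolyformula}: a sack $\SS$ of type $\kk = (k_1, \ldots, k_n)$ corresponds to a factorization $\ff_\SS(x) = \prod_{j=1}^n \pp^j(x)$ where each $\pp^j(x)$ has degree at most $k_j - 1$ and coefficients summing to $1$. Fixing a target total distribution $\ff$ with polynomial $\ff(x)$, the fiber $\FF_\kk^{-1}(\ff)$ consists of all ways to write $\ff(x)$ as such an ordered product. First I would reduce to counting: the fiber is finite if and only if there are only finitely many such factorizations. The normalization $\pp^j(1) = 1$ for each $j$ is automatic once we know $\ff(1) = 1$ and we have \emph{any} factorization, so the real content is bounding the number of degree-constrained factorizations of a \emph{fixed} polynomial.

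The key step is the following dichotomy. If $\ff(x)$ is not identically zero, say it has degree $D \le T$ and leading coefficient nonzero, then in any factorization $\ff(x) = \prod_j \pp^j(x)$ each factor $\pp^j(x)$ is itself nonzero; writing $D_j = \deg \pp^j(x)$ we have $\sum_j D_j = D$ and $0 \le D_j \le k_j - 1$. Over $\CCC$, $\ff(x)$ factors uniquely into linear factors (up to order and scaling), so each $\pp^j(x)$ is a scalar multiple of a product of a sub-multiset of these $D$ linear factors; there are only finitely many ways to partition the multiset of roots among the $n$ factors, and for each such partition the scalars are pinned down by the $n$ conditions $\pp^j(1) = 1$ (using that no root equals... well, more carefully: the product of the scalars times $\ff(x)/(\text{lead})$ must reconstitute $\ff(x)$, and then the individual normalizations $\pp^j(1)=1$ determine each scalar, provided $\ff(1) = 1 \ne 0$). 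Hence the fiber is finite — indeed its cardinality is at most the number of such ordered partitions of the roots, which leads toward the multinomial formula promised in Corollary~\ref{degpartstototal}. If instead $\ff(x) \equiv 0$, then in any factorization at least one factor $\pp^j(x)$ is the zero polynomial — but the zero polynomial has coefficient sum $0 \ne 1$, contradicting the defining condition on a pseudodie. So this fiber is \emph{empty}, hence trivially finite. Either way, all fibers are finite, which also re-proves the non-emptiness-aside content of \question{strongfinitequestion} except that we should separately note (as Remark~\ref{pairofcoinsrem} and the surjectivity in Example~\ref{symex} suggest) when fibers are nonempty; but for Theorem~\ref{strongfinitethm} as stated only finiteness is claimed.

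I expect the main obstacle to be purely bookkeeping rather than conceptual: making the scalar-normalization argument airtight when $\ff(x)$ has repeated roots or when some candidate factor would have a root at $x = 1$. The point $x=1$ is harmless because we only ever evaluate the \emph{product-normalized} polynomial there and $\ff(1) = 1$; a factor may well vanish at $1$, but then its complementary factors compensate, and the scalar on each factor is still uniquely recovered from $\pp^j(1) = 1$ as long as we are careful to note that $\pp^j(1) = 0$ cannot occur in a genuine pseudodie — wait, it can, a pseudodie only needs coefficients summing to $1$, and $\pp^j(1)$ \emph{is} that sum, so in fact $\pp^j(1) = 1$ always, meaning no factor of a pseudodie vanishes at $1$, so $x=1$ is not a root of $\ff(x)$ either and the whole subtlety evaporates. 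That observation — that $x = 1$ is never a root — is worth stating explicitly as it both removes the edge case and shows the normalization conditions are always solvable and force a unique scalar per factor. With that in hand the proof is a short paragraph, and the only remaining care is to state the degree bound $k_\SS$ in a way that sets up the later corollary.
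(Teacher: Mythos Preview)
Your proposal is correct and follows essentially the same route as the paper's proof: factor $\ff_\SS(x)$ completely over $\CCC$, observe that any $\SS'$ in the fiber corresponds to a partition of the (multiset of) linear factors into $n$ blocks of sizes at most $k_j-1$, note that the scalar on each block is pinned down by the sum-to-$1$ normalization, and conclude finiteness because there are only finitely many such partitions. Your extra care about the case $\ff(x)\equiv 0$ and the potential root at $x=1$ is sound but ultimately unnecessary in the paper's setup, since the target carries the constraint $\ff(1)=\sum_t f_t = 1$, which both excludes the zero polynomial from the codomain and guarantees $x=1$ is never a root---the paper acknowledges exactly this point in Remark~\ref{sumone}.
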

\begin{proof} If $\SS$ is any sack of type $\kk$, then over the complex numbers, which are algebraically closed, $\ff_\SS(x)$ factors completely into (at most) $T$ monic linear factors times a non-zero scalar~\cite{Hungerford}*{Theorem~3.19}. What Stillman's observation means is that a sack $\SS'$ has total distribution $\ff_{\SS}(x)$ if and only if the multiplicity of every root $z$ of $\ff_{\SS}(x)$---that is, the number of linear factors $(x-z)$---equals the sum over $\pp \in \SS'$ of the multiplicities of $z$ in $\pp(x)$. The roots only determine each polynomial up to a non-zero homothety, but this is fixed in each case by the pseudoprobability condition that the coefficients sum to $1$.  In other words, such $\SS'$ correspond to partitions of the linear factors of $\ff_{\SS}(x)$ into subsets of sizes at most $k_j$. But there are only finitely many such partitions. 
\end{proof}

With a bit more work, we can give a formula for the degree of $\FF_{\kk}$. For general sacks of type $\kk$ all the pseudoprobabilities $p_i^j$ will be non-zero---that is, this condition fails only on the lower dimensional set where at least one of the equations  $p_i^j=0$ holds, so $\pp_j(x)$ will have degree exactly $k_j$. Likewise, for general sacks $\SS$, all the complex roots of $\ff_{\SS}(x)$ will be distinct: the polynomial that vanishes if this is not true is the degree $T$ discriminant~\cite{Lang}*{pp.161--162}. So the $\SS'$ in the same fiber as $\SS$ will correspond bijectively to partitions of the set of $T$ roots into exactly $n$ parts of sizes $k_j-1$. Choosing these parts in succession introduces no ambiguity because, although we have not required that all the $k_j$ be distinct, we work with \emph{ordered sacks}. Thus 
\begin{cor}\label{degpartstototal}
The degree of the map $\FF_{\kk}$ is $\displaystyle{\frac{T!}{\prod_{j=1}^n (k_j-1)!}}$.
\end{cor}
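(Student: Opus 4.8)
The plan is to read off the degree of $\FF_{\kk}$ as the number of points in a general fiber, using the explicit description of such fibers that was just extracted in the paragraph preceding the corollary. For a general sack $\SS$ of type $\kk$, the distribution polynomial $\ff_{\SS}(x)$ has degree exactly $T = \sum_j(k_j-1)$ and $T$ distinct complex roots; and each sack $\SS'$ in the same fiber corresponds, via Stillman's observation (\ref{multpolyformula}), to writing $\ff_{\SS}(x)$ as a product $\prod_{j=1}^n \pp_j(x)$ with $\deg \pp_j(x) = k_j - 1$ (the scalars being pinned down by the sum-to-one condition). Since the roots are distinct, choosing such a factorization is the same as choosing, for each $j$ in order, which $k_j-1$ of the remaining roots to assign to $\pp_j(x)$; so the fiber is in bijection with the set of ordered partitions of a $T$-element set into blocks of sizes $k_1-1, k_2-1, \ldots, k_n-1$.

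First I would make precise the claim that the generic fiber has exactly this size and that this size is the degree. The word ``general'' here has the meaning fixed in \subsec{geometric}: the locus where some $p^j_i = 0$ or where $\ff_\SS(x)$ has a repeated root is cut out by a nonzero polynomial (the relevant factor times the degree-$T$ discriminant), so its complement is where the count is valid, and Theorem~\ref{strongfinitethm} already guarantees every fiber is finite, so this common generic cardinality is by definition $\deg(\FF_\kk)$. Then I would simply count the ordered set partitions: the standard multinomial count gives
\[
\binom{T}{k_1-1}\binom{T-(k_1-1)}{k_2-1}\cdots\binom{k_n-1}{k_n-1} = \frac{T!}{\prod_{j=1}^n (k_j-1)!}\,,
\]
which is the claimed value. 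One small point worth a sentence: even if several $k_j$ coincide, no overcounting or undercounting occurs, precisely because sacks are \emph{ordered} — swapping two equal-order dice gives a genuinely different point of $\CCC^T$, so we do not quotient by such permutations.

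The only genuine subtlety — and the step I expect to be the main obstacle to state cleanly rather than to prove — is the bijection between fiber points and ordered factorizations, specifically checking that the ``coefficients sum to $1$'' normalization really does rigidify each factor uniquely. Given an unordered assignment of $k_j-1$ roots to the $j$-th slot, the polynomial $\prod (x - z)$ over those roots is determined only up to a scalar, but there is exactly one scalar multiple whose coefficients sum to $1$, namely division by the value of that product at $x=1$ — and this is nonzero for general $\SS$ since $x=1$ is not a root of $\ff_\SS(x)$ generically (its value there is $\prod_j \pp_j(1) = 1 \neq 0$, as each part distribution sums to $1$). Hence each ordered partition yields exactly one sack $\SS'$, and conversely each $\SS'$ in the fiber yields one ordered partition by reading off the roots of each $\pp_j(x)$; distinctness of the roots is what makes this map well-defined. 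With that in hand the degree formula is immediate, and I would present the argument essentially as the two or three sentences sketched above, leaving the multinomial identity as the routine calculation it is.
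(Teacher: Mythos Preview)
Your proposal is correct and follows essentially the same route as the paper: identify the general fiber with ordered partitions of the $T$ distinct roots of $\ff_\SS(x)$ into blocks of sizes $k_j-1$, note that ordered sacks mean no quotienting by permutations of equal-order dice, and read off the multinomial coefficient. The only refinement worth noting is that your normalization check is stronger than you state --- since every $\pp_j(1)=1$ by the sum-to-one condition, $\ff_\SS(1)=1$ \emph{always}, so $x=1$ is never a root and the rescaling is well-defined for every fiber, not just the general one.
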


\begin{exs}For pairs of coins this degree is $\frac{2!}{(1!)^2} = 2$. The associated total polynomial, $tx^2+sx+r$, will be general and have two preimages when it has distinct roots---i.e. when the discriminant $s^2-4rt$ does not vanish (cf. Examples~\ref{generalpairofcoins} and Remark~\ref{pairofcoinsrem}). If $k_1=2$ and $k_2=3$ so $T=3$ and the three roots are distinct, then the degree is $\frac{3!}{2!1!} = 3$. Taking the roots to be $-1$, $-2$ and $-\frac{1}{2}$ gives Example~\ref{cointhree} in which the three cases correspond to which root is chosen for the coin. As the example makes clear, this kind of ambiguity is typical, with or without a restriction to the strict case. For pairs of $6$-sided dice, the degree is $\frac{10!}{(5!)^2} = 252$ confirming \texttt{Magma}'s calculation~\calcpagecite{2.magma-b.htm}. 
\end{exs}

\begin{cor}\label{generalpartstototal}
The total distribution determines the parts up to permutation exactly for sacks of coins and singleton sacks.
\end{cor}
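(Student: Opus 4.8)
The statement has two directions. For the ``if'' direction there is essentially nothing to do: Corollary~\ref{generalcoins} handles every sack of coins, and for a singleton sack $\SS=(\pp^1)$ the multiplication formula~\eqref{multpolyformula} degenerates to $\ff_\SS(x)=\pp^1(x)$, so the total distribution \emph{is} the part distribution. The content is the ``only if'' direction: if a sack has $n\ge 2$ dice and at least one die of order $\ge 3$, then its total distribution fails to determine the parts up to permutation.

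To prove this it is enough to exhibit a single sack $\SS$ of the given type $\kk=(k_1,\dots,k_n)$ together with a second sack $\SS'$ of the same type with $\ff_{\SS'}=\ff_\SS$ that is not a reordering of $\SS$. I would take $\SS$ generic in the sense of Corollary~\ref{degpartstototal}: $\ff_\SS(x)$ has $T$ distinct complex roots, which I collect into a set $R$, and the factorization $\ff_\SS(x)=\prod_{j}\pp^j(x)$ then corresponds to an ordered partition of $R$ into nonempty blocks $B_j$ of sizes $k_j-1$, with $B_j$ the set of roots of $\pp^j$. This is precisely the correspondence underlying Corollary~\ref{degpartstototal}, and under it two sacks in the fiber over $\ff_\SS$ are reorderings of one another exactly when they induce the same \emph{unordered} partition $\{B_1,\dots,B_n\}$ of $R$ (the monic factor attached to a block is pinned down by the normalization ``value $1$ at $x=1$'', which is legitimate since $\ff_\SS(1)=1\ne 0$, so $1\notin R$). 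So I only need an ordered partition $(C_1,\dots,C_n)$ of $R$ with $|C_j|=k_j-1$ for all $j$ and $\{C_1,\dots,C_n\}\ne\{B_1,\dots,B_n\}$; the normalized monic factors read off from the $C_j$ then multiply back to $\ff_\SS$ by~\eqref{multpolyformula}.

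The $(C_j)$ are obtained from the $(B_j)$ by one element swap. Fix an index $j_0$ with $k_{j_0}\ge 3$, so that $|B_{j_0}|\ge 2$, and any other index $j_1$; pick distinct $a,a'\in B_{j_0}$ and some $b\in B_{j_1}$, and set $C_{j_0}:=(B_{j_0}\setminus\{a\})\cup\{b\}$, $C_{j_1}:=(B_{j_1}\setminus\{b\})\cup\{a\}$, and $C_j:=B_j$ for all other $j$. The block sizes are unchanged, and because $C_{j_0}$ still meets $B_{j_0}$ (it contains $a'$) and also meets $B_{j_1}$ (it contains $b$), while the blocks $B_1,\dots,B_n$ are pairwise disjoint, $C_{j_0}$ equals none of the $B_j$; hence $\{C_1,\dots,C_n\}\ne\{B_1,\dots,B_n\}$, as required. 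This is exactly where the two hypotheses enter: $n\ge 2$ is needed for an index $j_1$ to exist, and $k_{j_0}\ge 3$ is needed so that $B_{j_0}$ has the second element $a'$ witnessing $C_{j_0}\ne B_{j_1}$ --- for a block of size $1$ the swap merely transposes two blocks and produces a reordering, which is why sacks of coins escape. In degree language the same computation shows that $\deg\FF_\kk$ strictly exceeds $\prod_m\bigl(\#\{j:k_j=m\}\bigr)!$, the number of type-preserving reorderings of a sack of $n$ distinct dice, in exactly the excluded cases.

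The one step needing care is checking that the swapped partition is genuinely different from the original; the ``$C_{j_0}$ meets two disjoint blocks'' observation disposes of it in a line. Everything else --- that a generic $\SS$ has $T$ distinct roots with each $\pp^j$ of degree exactly $k_j-1$, and that the normalized factors recombine to $\ff_\SS$ --- is immediate from the set-up of Corollary~\ref{degpartstototal}.
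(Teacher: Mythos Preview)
Your proof is correct and follows essentially the same idea as the paper's: the key observation in both is that transposing a single root between two factors yields a genuinely new unordered factorization unless both factors are linear. The paper packages this as a degree comparison---showing $\deg\FF_\kk$ exceeds the permutation count $m_\kk=\prod_p m_p!$ except for coins or singletons---and only alludes to the root-swap as the reason; you instead construct the swap explicitly and verify that the new block $C_{j_0}$ straddles two old blocks, which is a clean way to certify the unordered partitions differ. Your final sentence recovers the paper's degree inequality, so the two arguments are really the same one told in opposite orders.
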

\begin{proof}
For a sack of $n$ coins, all $k_j=2$ and $T = n$ so the degree of $\FF_{\kk}$ is $n!$: this reproves Corollary~\ref{generalcoins}. In general, if we define $m_p$ to be the number of $j$ for which $k_j=p$ then $m_{\kk}:= \prod_p (m_p!)$ gives the number of permutations (by exchanging parts of the same order) that preserve a set of distribution polynomials that is general in the sense discussed above. The number $m_{\kk}$ always divides the degree of $\FF_{\kk}$ (because the quotient counts so-called distinguishable partitions) but the quotient is not $1$ unless all $k_j$ are equal to $2$ or there is only one die because, when we transpose a pair of roots from different factors, we only transpose the factors themselves when they are both of degree one. 
\end{proof}

Over the locus of sacks where $\ff_{\SS}(x)$ has repeated roots---in particular, when different dice in the sack have roots in common---the fibers have smaller order. Counting these fibers [and saying what sheets come together in each] is a more intricate problem, but the arguments above make it clear that this problem is fundamentally combinatorial. We will not enter into it here, except for pairs of totally fair dice in the next subsection (cf. Lemma~\ref{faircount} and Remark~\ref{diceramification}).

Instead we now turn to the fair case, which motivated this paper and which is more subtle. A fair die $\dd$ of order $k$ has polynomial $\frac{1}{k}\psi_k(x)$ where  $\psi_k(x):= (x^{k-1} + x^{k-2} + \cdots + x + 1) = \frac{(x^k-1)}{(x-1)}$. The associated roots are the roots of unity $\zeta_{m,k} := e^{2\pi i\frac{m}{k}}$  with $-\frac{k}{2} < m \le \frac{k}{2}$ of orders dividing $k$ and not equal to $1$~\cite{Lang}*{p.116}. Of these, only $-1= \zeta_{m,2m}$ (for even $k$), corresponds to a real factor $(x+1)$. Note that, by Euler's Identity,
$\zeta_{m,k}= \cos\bigl(2\pi\frac{m}{k}\bigr) + i\sin\bigl(2\pi\frac{m}{k}\bigr)$, which yields:

\begin{lem}\label{fairfactors} The roots $\zeta_{m,k}$ and $\zeta_{-m,k}$ are complex conjugates with real part $\cos(2\pi\frac{m}{k})$ and norm $1$, and hence the monic irreducible factors over $\RR$ of $\psi_k(x)$ are  $(x+1)$ for all even $k$ and $\chi_{m,k}(x) = (x^2 - 2\cos(2\pi\frac{m}{k}) + 1)$ for $1 \le m < \frac{k}{2}$. 
\end{lem}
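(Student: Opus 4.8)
The plan is to establish the two assertions of Lemma~\ref{fairfactors} in sequence, starting from the description of the roots of $\psi_k(x)$ already recorded just before the statement. First I would note that $\psi_k(x) = \frac{x^k-1}{x-1}$, so its roots are exactly the $k$-th roots of unity other than $1$, namely the $\zeta_{m,k} = e^{2\pi i m/k}$ for $m$ in the range $-\frac{k}{2} < m \le \frac{k}{2}$, $m \ne 0$. Applying Euler's identity gives $\zeta_{m,k} = \cos(2\pi\frac{m}{k}) + i\sin(2\pi\frac{m}{k})$, and since cosine is even and sine is odd, $\zeta_{-m,k} = \cos(2\pi\frac{m}{k}) - i\sin(2\pi\frac{m}{k})$ is the complex conjugate of $\zeta_{m,k}$; both clearly have modulus $1$ and real part $\cos(2\pi\frac{m}{k})$. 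This is the first assertion.

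For the factorization over $\RR$, I would argue that a monic irreducible real polynomial is either linear, corresponding to a real root, or quadratic, corresponding to a conjugate pair of non-real roots. The only real root of $\psi_k(x)$ among the $\zeta_{m,k}$ is $\zeta_{k/2,k} = -1$, which occurs precisely when $k$ is even (so that $m = \frac{k}{2}$ is an integer in range), contributing the linear factor $(x+1)$. Every other root $\zeta_{m,k}$ with $1 \le m < \frac{k}{2}$ is non-real and pairs with its conjugate $\zeta_{-m,k}$; multiplying the two linear factors gives
\[
(x - \zeta_{m,k})(x - \zeta_{-m,k}) = x^2 - (\zeta_{m,k} + \zeta_{-m,k})x + \zeta_{m,k}\zeta_{-m,k} = x^2 - 2\cos\bigl(2\pi\tfrac{m}{k}\bigr)x + 1 = \chi_{m,k}(x),
\]
using that the product of a unit-modulus complex number with its conjugate is $1$ and the sum is twice the real part. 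Each such $\chi_{m,k}(x)$ is irreducible over $\RR$ because its discriminant $4\cos^2(2\pi\frac{m}{k}) - 4 < 0$ for $m$ in the stated range (where $\cos(2\pi\frac{m}{k}) \ne \pm 1$). Since these factors are pairwise distinct (distinct $m$ in $1 \le m < \frac{k}{2}$ give distinct cosines) and their degrees, together with the possible $(x+1)$, sum to the degree $k-1$ of $\psi_k(x)$, we have the complete factorization.

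There is no real obstacle here: the lemma is essentially a bookkeeping consequence of the classical fact that $\Phi(x) = x^k - 1$ splits into cyclotomic-type conjugate pairs over $\RR$, combined with the elementary observation that $\bar{z}z = |z|^2$ and $z + \bar{z} = 2\operatorname{Re}(z)$. The only point requiring a word of care is confirming that the parameter ranges match up correctly---that $m = \frac{k}{2}$ contributes $-1$ exactly for even $k$, and that the quadratic factors are indexed by $1 \le m < \frac{k}{2}$ with no repetitions and no omissions---so I would state that range check explicitly rather than leave it implicit.
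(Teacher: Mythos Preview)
Your proposal is correct and follows exactly the approach the paper intends: the paper gives no separate proof of this lemma, treating it as an immediate consequence of the Euler-identity observation in the sentence preceding the statement, and your argument simply makes explicit the conjugate-pair bookkeeping and the degree count that the paper's ``hence'' elides.
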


\begin{rem}\label{coindiebis}
	If a totally fair sack contains a coin and a die of order $k$, no transposition of roots is possible if $k$ is odd, and only the exchange of the factors $(x+1)$ is possible if $k$ is even. We immediately recover Lemma~\ref{coindielem}. 
\end{rem}

\subsection{Pairs of totally fair dice} In this subsection, we analyze pairs $\dd$ and $\ddhat$ of totally fair dice of order $k$, with total polynomial $\Psi(x) =\frac{1}{k^2}\psi_k(x)^2 $ in more detail. 

We  first count such sacks, beginning with the observation that $\dd(x)$ (and hence $\ddhat(x)$) is determined by the vector $\rr := (r_1, \ldots r_{k-1})$ giving the multiplicities of the roots $\zeta_{m,k}$ which must satisfy the multiplicity inequalities
$ 0 \le r_m \le 2, m \in \ang{k-1}$ and the degree equality $\sum_{m=1}^{k-1} r_m = k-1$. If exactly $\ell$ of the $r_m$ equal $2$, then by the equality exactly $k-1-2\ell$ of the remaining $(k-1-\ell)$ will equal $1$ and the other $\ell$ will equal $0$. The number of such $\rr$ is simply the number of choices for the subsets of $\ang{k-1}$ with $r_m=2$ and $r_m=1$, respectively $\binom{k-1}{\ell}$ and $\binom{k-\ell-1}{k-1-2\ell}$.

\begin{lem}\label{faircount} The number of pairs of totally fair dice of order $k$ is $$\ds{\sum_{\ell=0}^{\lfloor\frac{k-1}{2}\rfloor}\binom{k-1}{\ell}\binom{k-\ell-1}{k-1-2\ell}}\,.$$
\end{lem}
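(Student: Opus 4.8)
The plan is to set up an explicit bijection between the pairs being counted and the multiplicity vectors $\rr$ introduced just before the statement, and then to enumerate those vectors by a two-stage choice. First I would pin down the bijection precisely. By the multiplicativity of the total polynomial~\eqref{multpolyformula}, a sack $(\dd,\ddhat)$ of two dice of order $k$ has fair total distribution exactly when $\dd(x)\ddhat(x) = \frac{1}{k^2}\psi_k(x)^2$. Since $\CCC[x]$ has unique factorization and $\psi_k(x)$ has the $k-1$ distinct simple roots $\zeta_{m,k}$, all different from $1$ (cf. Lemma~\ref{fairfactors} and the discussion preceding it), the polynomial $\dd(x)$ must be a nonzero scalar multiple of $\prod_m (x-\zeta_{m,k})^{r_m}$ for integers $0\le r_m\le 2$; the scalar is then uniquely fixed by the requirement that the coefficients of $\dd(x)$ sum to $1$, and this is legitimate precisely because $\prod_m(1-\zeta_{m,k})^{r_m}\neq 0$. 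Comparing degrees, $\sum_m r_m = \deg\dd \le k-1$ and likewise $\sum_m (2-r_m) = \deg\ddhat \le k-1$, which together force $\sum_m r_m = k-1$. Conversely, any $\rr$ with $0\le r_m\le 2$ and $\sum_m r_m = k-1$ produces $\dd(x)$ as above, and then $\ddhat(x):=\frac{1}{k^2}\psi_k(x)^2/\dd(x)$ is forced; its root multiplicities are the complementary values $2-r_m$, again lying in $\{0,1,2\}$ and summing to $k-1$, so $\ddhat$ is a bona fide order-$k$ die and the sack is totally fair. Hence pairs of totally fair dice of order $k$ are in bijection with such vectors $\rr$.

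The second step is the enumeration, which I would stratify by $\ell := \#\{m\in\ang{k-1} : r_m = 2\}$. For fixed $\ell$, the constraint $\sum_m r_m = k-1$ forces exactly $k-1-2\ell$ of the remaining coordinates to equal $1$ (the other $\ell$ being $0$); this is possible only when $0\le\ell\le\lfloor\frac{k-1}{2}\rfloor$, and the further requirement $k-1-2\ell\le k-1-\ell$ is automatic. For each admissible $\ell$, a vector $\rr$ is obtained by first choosing the $\ell$ indices where $r_m=2$, in $\binom{k-1}{\ell}$ ways, and then choosing, among the remaining $k-1-\ell$ indices, the $k-1-2\ell$ where $r_m=1$, in $\binom{k-\ell-1}{k-1-2\ell}$ ways. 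Summing the product of these two binomial coefficients over $\ell$ gives the stated formula.

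Everything after the bijection is elementary bookkeeping, so the only point demanding care is the first step: verifying that the normalizing scalar really is well defined --- which uses $\zeta_{m,k}\neq 1$ --- and that passing to the complementary multiplicities returns a genuine order-$k$ die. I expect that to be the main, if modest, obstacle. As a consistency check worth recording, the involution $\rr\mapsto(2-r_1,\ldots,2-r_{k-1})$ encodes interchanging the two dice of the sack and has the unique fixed point $\rr=(1,1,\ldots,1)$, namely the honestly fair pair; since sacks are ordered, the count displayed in the statement is exactly the one we want, and the number of unordered such pairs would be obtained by the usual averaging.
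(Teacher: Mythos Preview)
Your proof is correct and follows the same approach as the paper: biject pairs with multiplicity vectors $\rr\in\{0,1,2\}^{k-1}$ summing to $k-1$, then stratify by the number $\ell$ of coordinates equal to $2$ and count via the two binomial choices. In fact you are more careful than the paper, which simply asserts the bijection; your degree argument forcing $\sum_m r_m = k-1$ and your check that the normalizing scalar is well defined (using $\zeta_{m,k}\neq 1$) fill in details the paper leaves implicit.
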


For example, when $k=2$, there is a unique such sack, and when $k=6$, there are $\binom{5}{0}\binom{5}{5}+ \binom{5}{1}\binom{4}{3}+\binom{5}{2}\binom{3}{1}= 1+20+30=51$, confirming the count of solutions in
\ifshrink
\calcpagecite{2.solutions}.
\else
Table~\ref{twodicefiftyone}\footnote{In fact, the source for the typeset table was produced by using SAGE~\cite{SAGE} to list all pairs $(\dd, \ddhat)$ by the procedure leading to the count above, and passing this output to \texttt{Maple}\texttrademark~\cite{Maple} to have the fractions arranged in a more compact form and then add the wrappers needed to format the table.}.
\fi

\begin{rem} \label{diceramification}
A quick check on all the counts above can be obtained by working out how many of the points of a general fiber of the part-to-total map for a pair of dice of order $k$, for which the $2(k-1)$ roots of $\ff_{\SS}(x)$ are distinct, come together at each point of the totally fair fiber, where the roots are a multiset with $(k-1)$ distinct elements each occurring twice. To say that two $(k-1)$-element subsets of the latter multiset are equal as multisets (and hence give equal polynomials) means exactly that the corresponding vectors $\rr$ of multiplicities are equal. When $r_m$ is $0$ or $2$, there is no ambiguity about which subset of the \thst{m}{th} pair is being chosen but when $r_m=1$ there are $2$ subsets. Thus, for a point indexed in the $\thst{\ell}{th}$ term of the sum in Lemma~\ref{faircount} where $(k-1-2\ell)$ of the $r_m$ equal $1$, there will be $2^{(k-1-2\ell)}$ points of the general fiber coming together. For example, when $k=6$ and the general fiber has $252$ elements, we obtain the check $2^5\cdot 1+2^3\cdot 20+2^1\cdot 30 = 252$.
\end{rem}

\begin{prop}\label{ddfactors} Suppose that $\dd$ and $\ddhat$ are a pair of strict dice of order $k$ whose  total distribution is fair.	
	\begin{enumerate}
		\item If $k$ is odd, $\dd(x) = c \prod_m \bigl(\chi_{m,k}(x)\bigr)^{r_m}$ and $\ddhat(x) = \chat \prod_m \bigl(\chi_{m,k}(x)\bigr)^{(2-{r_m})}$ where $0\le r_m \le 2$, $\sum_m r_m = \frac{k-1}{2}$ and $c$ and $\chat$ are non-zero scalars. 
		\item If $k$ is even, we get the same conclusion except that $\sum_m r_m = \frac{k-2}{2}$ and each of $\dd(x)$ and $\ddhat(x)$ must also contain one of the factors $(x+1)$ in $\psi_k$.
	\end{enumerate}	
\end{prop}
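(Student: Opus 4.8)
The plan is to use Stillman's observation~\eqref{multpolyformula} to translate the hypothesis into a statement about factorizations of the polynomial $\Psi(x) = \frac{1}{k^2}\psi_k(x)^2$, and then invoke the reality constraint that $\dd$ and $\ddhat$ are strict (in particular, real). Since the total distribution is fair, \eqref{multpolyformula} gives $\dd(x)\,\ddhat(x) = \frac{1}{k^2}\psi_k(x)^2$. First I would observe that both $\dd(x)$ and $\ddhat(x)$ have degree at most $k-1$, and since the product has degree exactly $2(k-1)$ (as $\psi_k$ has degree $k-1$), each of $\dd(x)$ and $\ddhat(x)$ must have degree exactly $k-1$ with nonzero leading coefficient. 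Thus the $2(k-1)$ roots of $\dd(x)\ddhat(x)$, counted with multiplicity, are exactly the roots of $\psi_k(x)$ each taken twice---that is, each primitive-or-not $k$-th root of unity $\zeta_{m,k} \neq 1$ occurs with multiplicity $2$ in the product, hence with multiplicity $r_m \in \{0,1,2\}$ in $\dd(x)$ and multiplicity $2 - r_m$ in $\ddhat(x)$.

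Next I would impose reality. Because $\dd$ is a strict die its distribution polynomial has real coefficients, so its complex roots come in conjugate pairs with equal multiplicities. By Lemma~\ref{fairfactors}, the roots $\zeta_{m,k}$ and $\zeta_{-m,k}$ are complex conjugates and the real irreducible factors of $\psi_k(x)$ are $(x+1)$ (only when $k$ is even) together with the quadratics $\chi_{m,k}(x)$ for $1 \le m < \frac{k}{2}$. Reality of $\dd(x)$ therefore forces the multiplicity of $\zeta_{m,k}$ and of $\zeta_{-m,k}$ to agree, so that $\dd(x)$ is, up to a nonzero real scalar $c$, a product of powers $\bigl(\chi_{m,k}(x)\bigr)^{r_m}$ for $1 \le m < \frac{k}{2}$, times possibly a factor $(x+1)^{s}$ when $k$ is even; and similarly $\ddhat(x)$ is $\chat$ times $\prod_m\bigl(\chi_{m,k}(x)\bigr)^{2-r_m}$ times $(x+1)^{2-s}$. (Here I am reindexing the single parameter $r_m$ per conjugate pair, which is the $\rr$-vector bookkeeping already set up before Lemma~\ref{faircount}.)

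Finally I would handle the $(x+1)$ factor and the degree count. When $k$ is odd, $\psi_k(x)$ has no real root, so there is no $(x+1)$ factor; comparing degrees, $\deg \dd(x) = 2\sum_m r_m = k-1$ forces $\sum_m r_m = \frac{k-1}{2}$, which is an integer since $k$ is odd. When $k$ is even, $\psi_k(x) = \frac{x^k-1}{x-1}$ has $-1$ as a simple root, so $(x+1)^2$ divides $\Psi(x) \cdot (x-1)^2 / (x-1)^2$—more carefully, $(x+1)$ divides $\psi_k(x)$ exactly once, hence $(x+1)^2$ divides $\psi_k(x)^2$ exactly, so the exponent $s$ of $(x+1)$ in $\dd(x)$ plus the exponent $2-s$ in $\ddhat(x)$ totals $2$, with $s \in \{0,1,2\}$; but a die polynomial is real, and $(x+1)^s$ contributes no constraint from conjugation, so at this stage $s$ could a priori be $0$, $1$, or $2$. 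To pin down $s = 1$ for each die I would argue by degree parity: the $\chi_{m,k}$ contribute even degree $2\sum_m r_m$ to each of $\dd(x)$ and $\ddhat(x)$, while $\deg\dd(x) = k-1$ is odd, so $\dd(x)$ must contain an odd power of $(x+1)$, and since the only choices are $0,1,2$ we get $s=1$; symmetrically $\ddhat(x)$ contains $(x+1)^{2-s} = (x+1)$. Then $\deg\dd(x) = 2\sum_m r_m + 1 = k-1$ gives $\sum_m r_m = \frac{k-2}{2}$, again an integer since $k$ is even. The nonzero scalars $c, \chat$ are then automatically determined (and real, hence the constraint ``coefficients sum to $1$'' is consistent) by the pseudoprobability normalization, exactly as in the proof of Theorem~\ref{strongfinitethm}.

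\textbf{Main obstacle.} The only genuinely delicate point is the treatment of the $(x+1)$ factor in the even case: one must be careful that strictness (reality) alone does not yet force the symmetric split $s = 2-s = 1$, and that the correct extra input is the degree-parity argument above. Everything else is a direct unwinding of~\eqref{multpolyformula} together with Lemma~\ref{fairfactors}; no hard computation is needed.
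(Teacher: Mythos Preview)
Your proposal is correct and follows essentially the same approach as the paper: reduce via~\eqref{multpolyformula} to $\dd(x)\ddhat(x)=\frac{1}{k^2}\psi_k(x)^2$, use reality of $\dd,\ddhat$ together with Lemma~\ref{fairfactors} to group the roots into the quadratic factors $\chi_{m,k}$, and in the even case invoke degree parity to force each die to carry exactly one $(x+1)$. The paper's proof is a two-sentence sketch saying exactly this (``immediate from $\dd\cdot\ddhat=(\psi_k)^2$, Lemma~\ref{fairfactors}, and the fact that $\dd$ and $\ddhat$ are both real'', with ``parity'' for the even case); your write-up simply spells out the degree and parity bookkeeping that the paper leaves implicit.
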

\begin{proof} For $k$ odd, this is immediate from the hypothesis $\dd\cdot\ddhat = (\psi_k)^2$, Lemma~\ref{fairfactors}, and the fact that $\dd$ and $\ddhat$ are both real. For $k$ even, the only additional observation is that parity forces each to have exactly one of the two factors $x+1$ in the right hand side. 
\end{proof}

As a first application, we give a very short third proof of Proposition~\ref{crapsfairno}.

\begin{cor}\label{crapsfairnobis} You can't play a fair game of craps with a loaded pair of dice.
\end{cor}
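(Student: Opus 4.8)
The plan is to specialize Proposition~\ref{ddfactors} to a pair $\dd,\ddhat$ of $6$-sided dice, so $k=6$ is even and we are in case~(2). The roots of unity involved are indexed by $1\le m<\tfrac{k}{2}=3$, i.e.\ $m\in\{1,2\}$, and the corresponding irreducible real quadratic factors of $\psi_6$ are $\chi_{1,6}(x)=x^2-2\cos(\tfrac{2\pi}{6})x+1=x^2-x+1$ and $\chi_{2,6}(x)=x^2-2\cos(\tfrac{4\pi}{6})x+1=x^2+x+1$; together with $x+1$ these give $\psi_6(x)=(x+1)(x^2-x+1)(x^2+x+1)$. Proposition~\ref{ddfactors}(2) then says that any strict pair with fair total distribution has
\[
\dd(x)=c\,(x^2-x+1)^{r_1}(x^2+x+1)^{r_2}(x+1),\qquad \ddhat(x)=\chat\,(x^2-x+1)^{2-r_1}(x^2+x+1)^{2-r_2}(x+1),
\]
with $c,\chat$ nonzero scalars, $0\le r_m\le 2$, and $r_1+r_2=\tfrac{k-2}{2}=2$.

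Next I would run through the three possibilities $(r_1,r_2)\in\{(1,1),(2,0),(0,2)\}$. In the case $(1,1)$, both $\dd(x)$ and $\ddhat(x)$ are a scalar multiple of $(x+1)(x^2-x+1)(x^2+x+1)=\psi_6(x)$, and the normalization $\dd(1)=\ddhat(1)=1$ forces that scalar to be $\tfrac16$ since $\psi_6(1)=6$; this is exactly the pair of fair dice. In the cases $(2,0)$ and $(0,2)$, the polynomial $(x^2-x+1)^2(x+1)$ occurs (up to a scalar) as $\dd(x)$ or as $\ddhat(x)$. A one-line expansion gives $(x^2-x+1)^2(x+1)=x^5-x^4+x^3+x^2-x+1$, whose coefficient vector contains entries of both signs; multiplying by any nonzero scalar leaves at least one negative coefficient, so the resulting ``die'' violates the nonnegativity condition of a strict die. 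Hence these two cases cannot occur.

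Therefore the fair pair is the only strict pair of $6$-sided dice whose total distribution is fair, which is the assertion of the corollary. The only step needing any care --- and the natural place to slip --- is the bookkeeping: correctly identifying the index range $m\in\{1,2\}$ and the constraint $r_1+r_2=\tfrac{k-2}{2}$, and keeping track of the ``one factor $(x+1)$ in each'' clause of Proposition~\ref{ddfactors}(2), which is precisely what trims the list to these three cases. Everything else reduces to reading off the sign of a single coefficient of an explicit degree-$5$ polynomial.
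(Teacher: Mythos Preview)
Your proof is correct and follows essentially the same approach as the paper: apply Proposition~\ref{ddfactors}(2) with $k=6$, enumerate the admissible $(r_1,r_2)$, observe that $(1,1)$ gives the fair pair, and rule out $(2,0)$ and $(0,2)$ because $(x^2-x+1)^2(x+1)=x^5-x^4+x^3+x^2-x+1$ has negative coefficients. The paper's version is slightly terser (it skips the explicit case enumeration and normalization), but the argument is the same.
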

\begin{proof}
Since $\psi_k = \chi_{1,6}(x)\chi_{2,6}(x) (x+1)$, the only sack, other than a pair of fair dice, allowed by Proposition~\ref{ddfactors} has $\dd(x)$ and $\ddhat(x)$ multiples of  
\[
\bigl(\chi_{1,6}(x)\bigr)^2 (x+1)= (x^2-x+1)^2 (x+1) =x^5 - x^4 + x^3 + x^2 - x + 1
\]
and 
\[\bigl(\chi_{2,6}(x)\bigr)^2 (x+1) = (x^2+x+1)^2 (x+1)= x^5 + 3x^4 + 5x^3 + 5x^2 + 3x + 1\,.
\]
This sack is not strict: it yields the second row of
\ifshrink
\calcpagecite{2.solutions}.
\else
Table~\ref{twodicefiftyone}.
\fi
\end{proof}

\begin{rem} \label{sumone}
Theorem~\ref{strongfinitethm} does not hold without some non-degeneracy condition on our polynomials because if any $\pp(x)=0$ so is $\ff(x)$. The normalization that the coefficients sums to $1$ fits our probability context much more naturally than, say, requiring the $\pp(x)$ to be monic. 
\end{rem}

\section{A menagerie of exotic pairs}\label{exoticsacks}

The calculations reported through \sect{coindie} (and many we have not included) led us to suspect that the answer to~\question{fairquestion} was positive. Stillman's insight quickly led us to counterexamples.

\subsection{Existence of exotic pairs}\label{exoticpairs}

Let's start, as we did in the event, with pairs of dice of the same order.  By now our strategy is clear: pick an order $k$, list all the sacks $\dd(x)$ and $\ddhat(x)$ permitted by Proposition~\ref{ddfactors}, and look for strict sacks. \textit{A priori}, we can think of no reason why such sacks should exist, but \textit{a posteriori}, we learn that they almost always do.  Here is the first $13$-sided example we found.

\begin{ex}\label{tridecadahedralex} For simplicity, we suppress the index $13$ and the dependence on $x$. Consider the dice
$\dd = c\,\,\chi_1\cdot\chi_2\cdot\chi_3\cdot\chi_4\cdot\chi_4\cdot\chi_6$
and
$\ddhat = \chat\,\, \chi_1\cdot\chi_2\cdot\chi_3\cdot\chi_5\cdot\chi_5\cdot\chi_6$.
in which we have swapped the $\chi_4$ and $\chi_5$ factors of a fair pair of dice. By Lemma~\ref{fairfactors}, the total distribution of this sack is fair. Both dice are palindromic and the lower ``half'' of each probability vector is shown, approximately, in Table ~\ref{tridecatab}.	
{\renewcommand{\arraystretch}{1.1}\renewcommand{\tabcolsep}{3pt}
\begin{center}	\refstepcounter{equation}\label{tridecatab}\vskip3pt\centerline{\textbf{Table~\ref{tridecatab}} Numerical probabilities for tridecahedral dice of Example~\ref{tridecadahedralex} }
\small\begin{tabular}{c>{$}c<{$}>{$}c<{$}>{$}c<{$}>{$}c<{$}>{$}c<{$}>{$}c<{$}>{$}c<{$}}
& \text{\normalsize$d_0$} & \text{\normalsize$d_1$} &\text{\normalsize$d_2$} &\text{\normalsize$d_3$} &\text{\normalsize$d_4$} &\text{\normalsize$d_5$} &\text{\normalsize$d_6$} \\
	{\normalsize $\dd$}& 0.0992916&0.0210685&0.1381701&0.0410895&0.0693196&0.1241391&0.0138431\\
	{\normalsize $\ddhat$}& 0.0595938&0.1065425&0.0732460&0.0499115&0.0997570&0.0877406&0.0464172\\
	\end{tabular}\vskip3pt
\end{center}
}
The coefficients in Table~\ref{tridecatab} are roundings to $7$ places of coefficients computed from $14$ place values for the cosines that occur in the $\chi_m$, so the positivity of these coefficients---that is, the strictness of the sack---is also unimpeachable. 
\end{ex}

The \emph{smallest} exotic pair is not Example~\ref{tridecadahedralex} but is obtained from a fair pair of dice of order $10$ by swapping the $\chi_3$ and $\chi_4$ factors.  As the upper half of Table~\ref{decatab} shows, this example is strict but seems a bit of a cheat because four of the $\dd$-probabilities are zero.  There are $3$ exotic pairs of order $12$. Two obtained by swapping $\chi_2$ and $\chi_3$ or $\chi_3$ and $\chi_4$ in a fair pair of dice  have rational part probabilities, but again some are zero, as the reader may check.  The smallest exotic pair with all probabilities positive, shown in the lower half of  Table~\ref{decatab}, is obtained by swapping $\chi_4$ and $\chi_5$. 

\begin{center}
\refstepcounter{equation}\label{decatab}\vskip3pt\centerline{\textbf{Table~\ref{decatab}} Probabilities of exotic pairs of dice with $10$ and $12$ sides}\nopagebreak
{\small
\begin{tabular}{>{$}c<{$}rr>{$}c<{$}>{$}c<{$}>{$}c<{$}>{$}c<{$}>{$}c<{$}>{$}c<{$}}
\text{Order} & Swap & Face & 1 & 2 & 3 & 4 & 5  & 6  \\
\multirow{2}{*}{$10$} & \multirow{2}{*}{$\chi_3,\chi_4$} & {\normalsize $\dd_{10}$}  & {\frac{5-\sqrt{5}}{20}} & 0 & {\frac{\sqrt{5}}{10}} & 0 & {\frac{5-\sqrt{5}}{20}}&  \\
& & {\normalsize $\ddhat_{10}$} & {\frac{5+\sqrt{5}}{100}} & {\frac{5+\sqrt{5}}{50}} &  {\frac{1}{10}} & {\frac{5-\sqrt{5}}{50}}&  {\frac{15-\sqrt{5}}{100}} &  \\[8pt]
\multirow{2}{*}{$12$}&\multirow{2}{*}{$\chi_4,\chi_5$} & {\normalsize $\dd_{12}$}  & {\frac{2-\sqrt{3}}{4}} & {\frac{2\sqrt{3}-3}{4}}  & {\frac{2\sqrt{3}-3}{4}} & \frac{2-\sqrt{3}}{4} & {\frac{2\sqrt{3}-3}{4}}& \frac{2-\sqrt{3}}{4} \\
&& {\normalsize $\ddhat_{12}$} & {\frac{2+\sqrt{3}}{36}} & {\frac{1}{36}} &  {\frac{4+\sqrt{3}}{36}} & {\frac{2-\sqrt{3}}{36}}&  {\frac{5}{36}} & {\frac{4-\sqrt{3}}{36}}
\end{tabular}\vskip3pt
}
\end{center}

We leave the reader to find the smallest exotic sack, of type $(3,4)$. For many others, found using \texttt{Macaulay2}, \texttt{Magma}, and \texttt{Maple}\texttrademark~\cites{Macaulay, Magma, Maple} see \calctoccite{7.1}.

\subsection{Asymptotics of exotic pairs}\label{asymexoticpairs}

Similar calculations yield lots of exotic sacks of many combinatorial types. Here we will consider only sacks of $2$ dice, where we have evidence~\calctoccite{7.2} for the existence of such sacks for almost all pairs of orders, and further, for asymptotic predictions about the numbers of such sacks. We summarize our evidence in two Conjecture-Problems that we challenge interested readers to take up\footnote{If you do, please communicate your results so we can update the webpage~\calcpagecite{progress.htm} which we invite you to visit to learn about (and to avoid duplicating) work of others. We do not ourselves plan any further work, except as mentors to interested students.}.   First, our calculations (and Lemma~\ref{coindielem} for the case $k=2$) confirm the list of exceptions below and suggest its completeness.

\begin{conjprob} \label{exoticpairsexist}
There are exotic sacks of $2$ dice of every pair of orders $(k,k')$ with $2 \le k \le k'$ except for $(2, k')$, $(3,3)$, $(3,6)$, $(3,9)$, $(4,4)$, $(4,8)$, $(5,5)$, $(6,6)$, $(7,7)$, $(8,8)$, $(9,9)$, and $(11,11)$.
\end{conjprob}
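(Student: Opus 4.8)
The plan is to translate the statement, via Stillman's observation (equation~(\ref{multpolyformula})), into a question about factorizations of $\psi_k(x)\psi_{k'}(x)$, then split the work into a finite verification for the listed pairs and a uniform construction for all others. Generalizing Proposition~\ref{ddfactors} from $k=k'$ to an arbitrary type $(k,k')$: a sack $(\dd,\ddhat)$ is totally fair precisely when $\dd(x)\ddhat(x)=\tfrac{1}{kk'}\psi_k(x)\psi_{k'}(x)$, and since $\psi_k$ and $\psi_{k'}$ are squarefree and split over $\RR$ into the linear and quadratic factors described in Lemma~\ref{fairfactors}, such sacks correspond bijectively to the ways of distributing those irreducible factors --- each carrying the multiplicity with which it divides the product $\psi_k\psi_{k'}$ --- between $\dd$ and $\ddhat$ subject to $\deg\dd=k-1$ and $\deg\ddhat=k'-1$. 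As in the proof of Theorem~\ref{strongfinitethm}, the ``coefficients sum to $1$'' normalization is then automatic, so the sack is strict exactly when both polynomials have nonnegative coefficients, and exotic exactly when the distribution is not the trivial one $\dd=\tfrac1k\psi_k$. In particular, by Corollary~\ref{degpartstototal} and Lemma~\ref{faircount} there are only finitely many candidate sacks for each fixed $(k,k')$, so whether an exotic one exists is a finite computation.

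For the listed exceptions one runs that computation. The case $(2,k')$ is exactly Lemma~\ref{coindielem} (cf.\ Remark~\ref{coindiebis}): the only linear factor of $\psi_2\psi_{k'}$ is $(x+1)$, which forces the coin, and hence the die, to be fair. For each of $(3,3),(3,6),(3,9),(4,4),(4,8),(5,5),(6,6),(7,7),(8,8),(9,9),(11,11)$ one enumerates the non-trivial distributions (there are very few) and exhibits a negative coefficient in each. The obstruction is uniform: with so few factors available, a die that acquires a quadratic factor of small angle --- middle coefficient near $-2$ --- or that takes $(x+1)$ the ``wrong'' number of times, cannot be balanced by the remaining factors of large angle, and expanding the product makes the negative coefficient explicit (for example, for $(3,6)$ the sole non-fair candidate has $\dd(x)=x^2-x+1$; for $(8,8)$ a non-fair candidate companion die acquires $(x^2-\sqrt{2}\,x+1)^2$, whose coefficient of $x$ is $-2\sqrt{2}<0$). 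These checks match the data posted online.

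For a pair $(k,k')$ not on the list, the plan is to exhibit a single-swap exotic sack, as in Example~\ref{tridecadahedralex} and Table~\ref{decatab}. For $k=k'$, take $a=\lfloor k/3\rfloor$ and $b=a+1$ and set $\dd(x)\propto\psi_k(x)\chi_{a,k}(x)/\chi_{b,k}(x)$ and $\ddhat(x)\propto\psi_k(x)\chi_{b,k}(x)/\chi_{a,k}(x)$; for $k<k'$ swap a quadratic factor of $\psi_{k'}$ of angle near $\pi$ into $\dd$ against one of angle near $0$ (or, when $\gcd(k,k')>1$, transfer a shared factor between the dice, as for $(3,12)$). A geometric-sum identity for $\psi_k/\chi_{b,k}$, convolved with $\chi_{a,k}$, gives the closed form (with $\theta_a=2\pi a/k$, $\theta_b=2\pi b/k$) for the coefficient of $x^m$ in $\psi_k(x)\chi_{a,k}(x)/\chi_{b,k}(x)$:
\[
\frac{(1-\cos\theta_a)\cos(\pi b/k) + (\cos\theta_a-\cos\theta_b)\cos\big((2m+1)\pi b/k\big)}{\sin(\pi b/k)\,\sin(2\pi b/k)}\,,
\]
and the same with $a$ and $b$ interchanged for $\ddhat$. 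Since $0<b<k/2$ the denominator is positive, so strictness reduces to bounding the numerator below; with $a\approx k/3$, $b=a+1$ one has $1-\cos\theta_a\to\tfrac32$ and $\cos(\pi b/k)\to\tfrac12$, while $|\cos\theta_a-\cos\theta_b|=2|\sin((2a+1)\pi/k)\sin(\pi/k)|\le 2\sin(\pi/k)\to 0$, so for all sufficiently large $k$ the numerator stays bounded away from $0$. The finitely many small non-exceptional orders --- in practice only $k=10$, where the swap $\chi_{3,10}\leftrightarrow\chi_{4,10}$ of Table~\ref{decatab} succeeds by a narrow margin --- are dispatched by direct computation with interval bounds for the cosines, and the mixed-order pairs are handled the same way.

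The hard part is making this construction genuinely uniform. The coefficient formula shows the numerator can be very small for special values of $m$ --- the order-$10$ instance is essentially tight --- so the crude estimate $|\cos((2m+1)\pi b/k)|\le 1$ does not suffice in the borderline range, and one needs either a more careful (possibly non-adjacent, or multi-step) choice of factors to swap for which the numerator is provably bounded below, or an arithmetic argument controlling how often and how closely $(2m+1)b$ can approach an odd multiple of $k$ modulo $2k$. This is exactly the point at which the problem resists a short argument, which is why it is recorded here as a Conjecture-Problem. A secondary, purely bookkeeping, difficulty is certifying that the exception list is complete: one must check that no overlooked non-trivial distribution for a small pair of orders yields a strict sack.
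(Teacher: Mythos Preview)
The paper does not prove this statement. It is explicitly labeled a \emph{Conjecture-Problem}: the authors write that their ``calculations (and Lemma~\ref{coindielem} for the case $k=2$) confirm the list of exceptions below and suggest its completeness,'' and they invite readers to take it up. So there is no proof in the paper for your proposal to be compared against.

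You seem to realize this yourself---your final paragraph concedes that the construction is not genuinely uniform and that ``this is exactly the point at which the problem resists a short argument, which is why it is recorded here as a Conjecture-Problem.'' That is an honest assessment, but it means what you have written is an outline of a strategy together with an identification of its gap, not a proof. Two specific points worth flagging. First, your treatment of the exceptional list is only a sample: for $(3,6)$ your check is correct, but for $(8,8)$ exhibiting a negative coefficient in the degree-$4$ factor $(x^2-\sqrt{2}\,x+1)^2$ is not enough, since that factor is multiplied by a degree-$3$ polynomial with positive coefficients that could in principle cancel the negatives; one really must expand each full candidate die. Second, and more seriously, your positivity argument for the single-swap construction is asymptotic (``for all sufficiently large $k$'') with no effective bound, so even granting the coefficient formula you have not shown that the construction covers every non-exceptional pair---and indeed the paper's own data in Conjecture-Problem~\ref{exotictriangles} and Figure~\ref{triscatter} show that the arithmetic of which swaps succeed is delicate and not governed by a simple threshold. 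Absent a uniform lower bound on the numerator, or a different swap rule with such a bound, the existence direction remains open, exactly as the paper says.
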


Inspection of our data suggests that the growth of the number of such sacks with the orders of the dice exhibits asymptotic regularities that call for explanation. We content ourselves with giving precise conjectures when one die has order~$3$.

\begin{conjprob} \label{exotictriangles} If $S_3(k)$ be the set $m$ with $1 \le m <\frac{k}{2}$ such that the sack obtained by exchanging the $\chi_m$ factor of a fair $k$-die with the $\frac{1}{3}(x^2+x+1)$ of a fair $3$-die is strict (hence exotic unless $k = 3m$), $M_3(k) := \max S_3(k)$ and $R_3(k) := \frac{M_3(k)}{k}$, then
\begin{enumerate}
\item Any $m$ between $ \lceil\frac{k}{4}\rceil$ and $ M_3(k)$ is in $S_3(k)$.  
\item For $k \ge 336$, $M_3(k) \ge \lfloor\frac{5k}{12}\rfloor$.
\item $M_3(k) \le \frac{60}{143}k$ with equality exactly when $k$ is a multiple of $143$.
\item \label{exoticitem} $M_3(k+143) - M_3(k)=60$, except that there is a sequence $b_a$ with $b_1 = 0$, $b_{a+1}-b_a$ either $0$ or $1$, and such that, if $a$ is not divisible by $143$ and $k = 603a+143b_a$, then $M_3(k+143) - M_3(k)=59$.
\item The $\limsup$ as $k \to \infty$ of $R_3(k)$ equals $\frac{60}{143} \simeq 0.4195804$.
\end{enumerate} 
\end{conjprob}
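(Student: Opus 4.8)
Since this is posed as an open Conjecture-Problem, what follows is a proposed line of attack rather than a proof. The plan is to first convert every item into an explicit family of trigonometric inequalities, and then to analyze that family through Diophantine approximation near the fraction $\tfrac{60}{143}$. For the reduction: by Stillman's observation together with the reasoning of Proposition~\ref{ddfactors} (applied to type $(3,k)$, where by Lemma~\ref{fairfactors} the order-$3$ die must absorb exactly one real quadratic factor of $\psi_3\psi_k$, and the only candidates are $\psi_3$ itself or some $\chi_{m,k}$), the swap in question produces the $3$-die $c\,\chi_{m,k}(x)$ and the $k$-die $\chat\,\psi_3(x)\,\psi_k(x)/\chi_{m,k}(x)$. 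The $3$-die is strict exactly when $\cos(2\pi m/k)\le 0$, i.e.\ $m\ge\lceil k/4\rceil$---already the lower endpoint in item~(1)---so everything reduces to coefficient-positivity of $Q_{m,k}(x):=(x^2+x+1)\,\psi_k(x)/\chi_{m,k}(x)$. Expanding $\tfrac1{\chi_{m,k}(x)}=\sum_{l\ge0}U_l(\cos\tfrac{2\pi m}{k})\,x^l$ with $U_l(\cos\theta)=\sin((l+1)\theta)/\sin\theta$, multiplying by $\psi_k$ and then by $x^2+x+1$, and collapsing the resulting sums of sines with the Dirichlet-kernel identity, one finds (writing $\phi=\pi m/k$) that the four boundary coefficients of $Q_{m,k}$ are $1$, $4\cos^2\phi$ and their mirror images, hence positive, while the bulk coefficients are nonnegative if and only if
\[
3\sin\!\left(\tfrac{2\pi m}{k}\right)\ \ge\ \sin\!\left(\tfrac{2\pi jm}{k}\right)-\sin\!\left(\tfrac{2\pi(j-3)m}{k}\right),\qquad 4\le j\le k-1 .
\]
Thus $m\in S_3(k)$ iff $\lceil k/4\rceil\le m<\tfrac k2$ and all of these hold, and $M_3(k)$ is the largest admissible $m$.

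Item~(1) then becomes the assertion that $S_3(k)$ is the integer interval $[\lceil k/4\rceil,\,M_3(k)]$. I would prove this by showing that the ``defect'' $\min_{4\le j\le k-1}\big(3\sin(\tfrac{2\pi m}{k})-\sin(\tfrac{2\pi jm}{k})+\sin(\tfrac{2\pi(j-3)m}{k})\big)$ is monotone decreasing in $m$ on $\lceil k/4\rceil\le m<\tfrac k2$ at fixed $k$: the term $3\sin(\tfrac{2\pi m}{k})$ is decreasing on this range, and the binding index $j^\ast(m)$---the one maximizing the right-hand side, controlled by how close $\tfrac{jm}{k}$ lies to a half-integer---can be tracked and shown to drift so that once the defect turns negative it stays negative. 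This is the one item I expect to follow from ``soft'' monotonicity estimates.

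Items~(2)--(5) are the arithmetic core, and I expect $143=11\cdot 13$ and $\tfrac{60}{143}$ to emerge as the solution of a finite extremal problem for the ratio $x=m/k$. Writing $jm\equiv r\pmod k$, the displayed inequality says that $\big\|\tfrac rk-\tfrac12\big\|$ cannot be too small, over the accessible residues $r$, relative to $\sin(\tfrac{2\pi m}{k})$---a three-distance / Lonely-Runner flavoured constraint. For the lower bounds~(2) and the ``$\geq$'' half of~(5) I would exhibit the near-extremal swaps explicitly---take $m=\lfloor\tfrac{5k}{12}\rfloor$, and $m=\tfrac{60k}{143}$ when $143\mid k$---and verify that all the inequalities hold for $k$ large, which reduces to bounding finitely many sine values whose arguments are near rational multiples of $\pi$ of small denominator. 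For the matching upper bound~(3) I would show that as soon as $m/k>\tfrac{60}{143}$ some index $j$---produced from the continued-fraction convergents of $m/k$, equivalently from the Farey neighbours of $\tfrac{60}{143}$---makes the right-hand side exceed $3\sin(\tfrac{2\pi m}{k})$; running this over the finitely many ``dangerous'' convergents should simultaneously force the bound and show that equality is attained exactly at multiples of $143$. Item~(4), with its Beatty-type sequence $b_a$ (so $b_{a+1}-b_a\in\{0,1\}$), should then follow from $M_3(k)$ behaving like $\lfloor\alpha k\rfloor$ with $\alpha$ just below $\tfrac{60}{143}$ off multiples of $143$, so that $M_3(k+143)-M_3(k)=60$ except when a floor crosses an integer; pinning down exactly when that occurs is what should produce the condition that $a$ is not divisible by $143$ and the anchor $k=603a+143b_a$.

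The main obstacle is the passage from ``effective but unpinned'' to ``exact'': soft arguments will only place $\limsup_k R_3(k)$ somewhere in $[\tfrac{5}{12},\tfrac12)$, and promoting that to the exact value $\tfrac{60}{143}$---together with the ``attained precisely on multiples of $143$'' clause and the precise exception set in~(4)---demands a complete, not merely asymptotic, description of which index $j$ is binding for each ratio $m/k$. That combinatorics of ``which convergent obstructs which ratio'' is where the real work lies, and is why items~(3)--(5) are substantially harder than~(1)--(2).
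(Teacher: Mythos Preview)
There is nothing in the paper to compare your attempt against: Conjecture-Problem~\ref{exotictriangles} is presented as an \emph{open} problem, supported solely by computational evidence (the calculations at~\calctoccite{7.2}, Figure~\ref{triscatter}, and the surrounding discussion). The authors offer no proof or proof sketch; on the contrary, they voice real uncertainty---noting that the exceptions in item~(\ref{exoticitem}) surfaced only after they had become ``sure'' of a simpler pattern from data to $k=500$, that further exceptions ``may be lurking'', and that the sequence $b_a$ and the $\liminf$ of $R_3(k)$ remain ``mysterious''. Your framing as a line of attack rather than a proof is therefore exactly right.

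On the substance of your plan: the reduction is sound and is the natural first step. The strictness of the swapped $3$-die does give precisely $m\ge\lceil k/4\rceil$, and expanding $Q_{m,k}$ via the Chebyshev-$U$ generating function $\tfrac{1}{1-2(\cos\theta)x+x^2}=\sum_{l\ge0}U_l(\cos\theta)\,x^l$ followed by a Dirichlet-kernel telescoping does produce a clean family of sine inequalities; your boundary coefficients $1$ and $4\cos^2\phi$ check out. The monotonicity-of-defect heuristic for item~(1) is plausible, though tracking the binding index $j^\ast(m)$ may be delicate. For items~(3)--(5), the three-distance\,/\,continued-fraction picture is a reasonable framework for explaining why $\tfrac{60}{143}$ is distinguished, but bear in mind the paper's own caveats: the authors report checks only to $k=10^5$ and explicitly allow for counterexamples to~(\ref{exoticitem}) appearing later. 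Aiming to \emph{prove} the precise $603a+143b_a$ exception pattern may therefore be premature; a more robust target would be items~(3) and~(5), with a structural characterization of the exception set in~(4) rather than the specific conjectured formula.
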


A few remarks are in order. Since $\frac{k}{4} \le m$ just ensures that the new $3$-die is strict, the point of these conjectures is to pin down $M_3(k)$ and $R_3(k)$. As Figure~\ref{triscatter} shows, their behavior warns us to exercise caution when making conjectures from inductive evidence. For example, $\frac{M_3(k)}{k} = \frac{5}{12}$ exactly when $k=12\ell$ and \emph{when $\ell \le 28$}! For some time we were sure, based on computations to $k=500$, that $M_3(k+143) = M_3(k)+60$ always held, and the surprising appearance of exceptions involving $603$ makes us a little nervous that others, possible with much larger modulus, may be lurking. The sequence $b_a$ and the $\liminf$ of $R_3(k)$  are mysterious. Empirically, $b_a$ takes on each value either $4$ for $5$ times, which leads to a lower bound for the $R_3(k)$ about $10^{-5}$ less than $\frac{60}{143}$ but we do not see enough of a pattern to conjecture a value for the $\liminf$. A SAGE~\cite{SAGE} notebook~\calcpagecite{7.SAGE-a.htm} is listing the exceptions up to $k=10^6$. See~\calcpagecite{progress.htm} for those up to $k=10^5$ or for word of counterexamples to Conjecture~\ref{exotictriangles}.(\ref{exoticitem}) found after going to press.
\begin{center}
    \tikzstyle{background grid}=[draw, black!50,step=.5cm]
	\begin{tikzpicture}[]{
		\node [inner sep=0pt,above right] (image) at (0,0)
            {\includegraphics[scale=0.60]{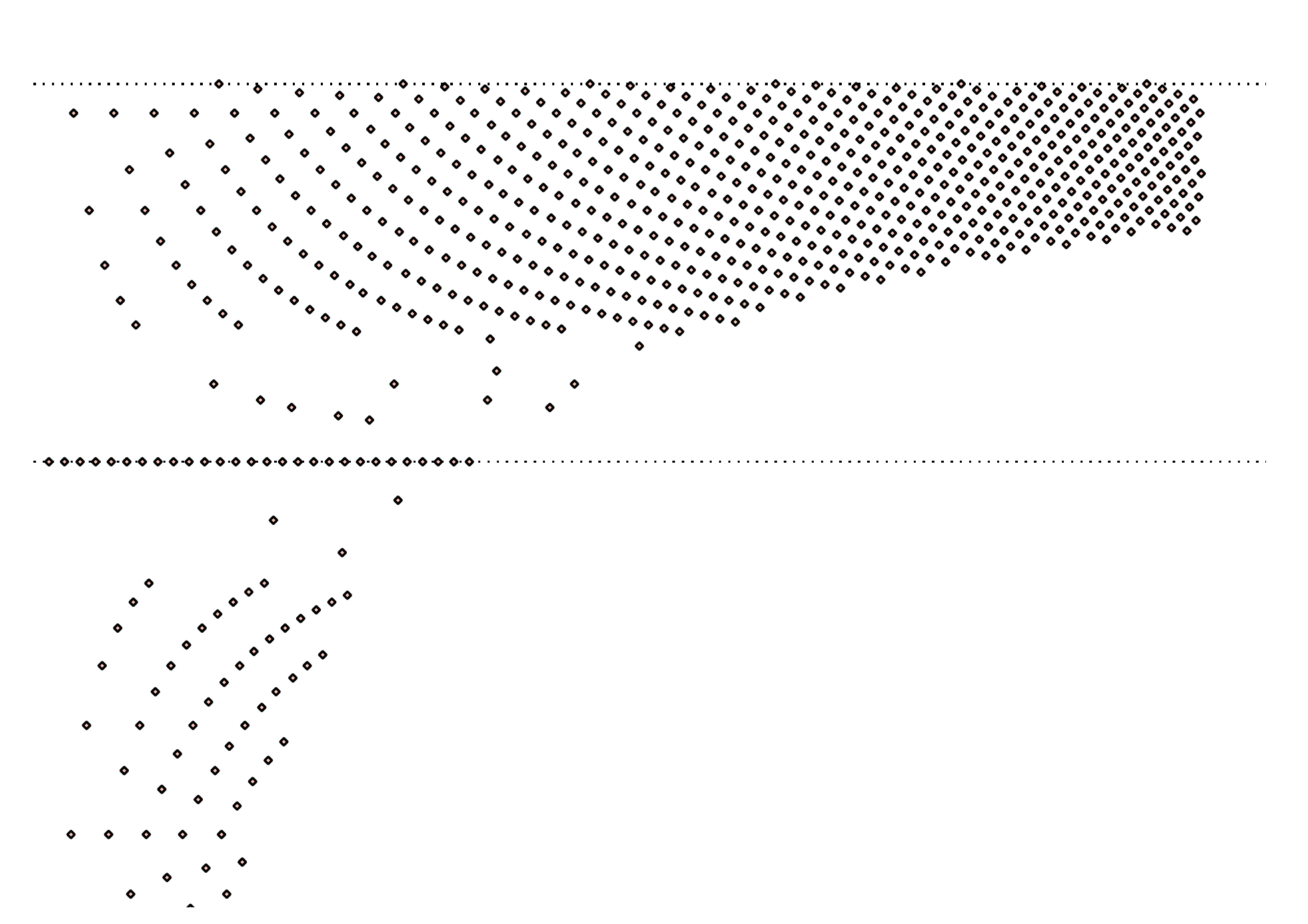}};
 		\begin{scope}[x={(image.south east)},y={(image.north west)}]        
			\fill (.360,.544) circle (2pt);
			\fill (.167,.990) circle (2pt);
			\fill (.310,.990) circle (2pt);
			\fill (.452,.990) circle (2pt);
			\fill (.594,.990) circle (2pt);
			\fill (.737,.990) circle (2pt);
			\fill (.879,.990) circle (2pt);
			\fill (.766,.783) circle (2pt);
			\draw (.963,.540) node [] {$\frac{5}{12}$};
			\draw (.963,.990) node [] {$\frac{60}{143}$};
			\draw (.64,.28) node [] {
			\begin{minipage}{6cm} \small
			The circular markers at the top show the points with $k$ a multiple of $143$. The other two markers show the point with $k=336$ with smaller multiples of $12$ directly to its left, and with $\text{$k=746=603+143$}$, the first of the exceptions in Conjecture-Problem~\ref{exotictriangles}(\ref{exoticitem}). A few points with $k <20$ lie below the plot.
			\end{minipage}};
		\end{scope}
		}
    \end{tikzpicture}
\refstepcounter{equation}\label{triscatter}\centerline{\textbf{Figure~\ref{triscatter}} Scatter plot of $(k, \frac{M_3(k)}{k})$ for $k$ up to $950$}
\end{center}

What happens if, instead of a $(3,k)$-sacks, we try to enumerate $(\ell,k)$-sacks for other fixed values of $\ell$? For small $\ell$, analogues of Conjecture~\ref{exotictriangles} hold and these can, at least at first, be much simpler (if less intriguing). For example, our calculations suggest that $S_4(k)$ is the interval extending from $\lceil\frac{k}{6}\rceil$ to $\lfloor\frac{k}{3}\rfloor$. 

But, complications soon arise as it becomes necessary to consider exchanges of sets of several factors $\chi_m$.
\ifshrink
\else
The swaps (each given by its $m$ and ordered first by the number swapped and then lexicographically) for $k=20$ are $[3\leftrightarrow 4]$, $[4\leftrightarrow 5]$, $[5\leftrightarrow 6]$, $[6\leftrightarrow 7]$, $[6\leftrightarrow 8]$, $[7\leftrightarrow 8]$, $[8\leftrightarrow 9]$, $[3,7 \leftrightarrow 4,6]$, $[3,7 \leftrightarrow 4,8]$,  $[4,9 \leftrightarrow 5,8]$, $[5,9\leftrightarrow 6,8]$, $[6,8\leftrightarrow 7,9]$, and for $k=21$ are $[3\leftrightarrow 4]$, $[4\leftrightarrow 5]$, $[5\leftrightarrow 6]$, $[6\leftrightarrow 7]$, $[7\leftrightarrow 8]$, $[8\leftrightarrow 9]$, $[9\leftrightarrow 10]$, $[2,5\leftrightarrow 3,6]$,$[3,7\leftrightarrow 4,8]$, $[4,10\leftrightarrow 5,9]$,  $[4,10\leftrightarrow 6,9]$, $[5,8\leftrightarrow 6,9]$, $[5,9\leftrightarrow 6,8]$, $[5,10\leftrightarrow 6,9]$, $[6,10\leftrightarrow 7,9]$, $[7,10\leftrightarrow 8,9]$, $[3,8,9\leftrightarrow 4,7,10]$and $[4,8,9\leftrightarrow 5,7,10]$. Roughly speaking, you can swap two sets of factors when the averages of the $m$ in each are sufficiently close. 

\fi
This suggests that the growth of $E(k)$ is likely to be exponential in $k$ but the computations we have made are too limited to provide convincing qualitative evidence, let alone to suggest precise conjectures. Table~\ref{ektab} shows the first few diagonal counts of exotic pairs of $k$-dice which already show super-linear growth. We leave these questions to the interested reader.
\begin{center}\renewcommand{\arraystretch}{1.}
\refstepcounter{equation}\label{ektab}\vskip3pt\centerline{\textbf{Table~\ref{ektab}} Number $E(k)$ of exotic pairs for small values of the order $k$ }
\begin{tabular}{r>{$}c<{$}>{$}c<{$}>{$}c<{$}>{$}c<{$}>{$}c<{$}>{$}c<{$}>{$}c<{$}>{$}c<{$}>{$}c<{$}>{$}c<{$}>{$}c<{$}>{$}c<{$}>{$}c<{$}>{$}c<{$}}
$k$ & 12 & 13 & 14 & 15 & 16 & 17 & 18 & 19 & 20 & 21 & 22 & 23 & 24 & 25  \\
$E(k)$ &  3& 2& 3& 4 & 4 & 6 & 7 & 8 & 12 & 18 & 19 & 27 & 42 & 60 
\end{tabular}\vskip3pt
\end{center}

\ifshrink\else
\section{Appendix: The game of craps}\label{craps}
\stepcounter{subsection}

We first recall here how the game of craps is played, ignoring secondary aspects that affect only betting on the game and not its outcome. Then we explain how the probability of winning it may be calculated, not because this is essential to the main theme of the paper, but because the arguments provide a very pretty application of basic ideas of finite probability to an infinite sample space and are increasingly rarely covered in contemporary probability courses. 

The game is played with two standard cubical dice with faces numbered from $1$ to $6$%
\ifshrink%
\else%
\footnote{So all rolls of single die differ by $1$ from our convention in subsection~\ref{defnot}, and $2$-dice totals differ by~$2$.}
\fi%
. There are two stages, in both of which, the outcome of play is determined by the total of the numbers showing on the two dice. In the first stage, consisting of single \emph{comeout} roll $t$, the player wins by throwing a $7$ or $11$ and loses by throwing a $2$, $3$ or $12$. Rolls of $4$--$6$ and $8$--$10$ lead to a second stage in which this first roll becomes the players \emph{point}. In this second stage, the player wins by re-rolling the comeout point $t$, loses by rolling a $7$ and rolls again in all other cases. So the second stage contains outcomes involving any finite number of rolls. 

{\renewcommand{\arraystretch}{1.4}
\begin{center}
	\refstepcounter{equation}\label{crapstable} \textbf{Table~\ref{crapstable}} Probabilities arising in finding the chance of winning at craps.
\begin{tabular}{>{$}c<{$}>{$}c<{$}>{$}c<{$}>{$}c<{$}>{$}c<{$}>{$}c<{$}>{$}c<{$}>{$}c<{$}>{$}c<{$}>{$}c<{$}>{$}c<{$}>{$}c<{$}}
\text{Total $t$ on first roll} & 2 & 3 & 4 & 5 & 6 & 7 & 8 & 9 & 10 & 11 & 12	\\
\PPP(t) & \frac{1}{36}& \frac{2}{36}& \frac{3}{36}& \frac{4}{36}& \frac{5}{36}& \frac{6}{36}& \frac{5}{36}& \frac{4}{36}& \frac{3}{36}& \frac{2}{36}& \frac{1}{36} \\
\PPP(w | t) & 0& 0& \frac{3}{9}& \frac{4}{10}& \frac{5}{11}& 1& \frac{5}{11}& \frac{4}{10}& \frac{3}{9}& 1& 0 \\
\PPP(t \cap w) & 0& 0& \frac{9}{324}& \frac{16}{360}& \frac{25}{396}& 1& \frac{25}{396}& \frac{16}{360}& \frac{9}{324}& 1& 0\\
\end{tabular}
\end{center}
\smallskip
}

Table~\ref{crapstable} summarizes the ingredients that go into finding the probability of a win for the player, an event that we denote $w$. Since the different totals are mutually exclusive, the probability that the player wins at craps can be computed from the table as 
\[
\PPP(w) = \sum_{t=2}^{12} \PPP(t \cap w) = \frac{243}{495} \simeq  0.4909\,.
\]

How can we check the values in the table? Those in the first row are standard counts. Given the total $t$, the number $i$ on the first die determines that on the second and $i$ must be between $1$ and $t-1$ if $t \le 7$ and between $t-6$ and $6$ if $t \ge 7$. Given the numbers in the second row, those in the third follow by applying the Intersection Formula for probabilities, $\PPP(t \cap w) = \PPP(t)\cdot \PPP(w | t)$. 

The conditional probabilities in the second row are more interesting. They can be computed from a tree diagram, but this diagram has the feature, seldom found in the examples treated in finite probability courses, of being infinite. An initial segment of the branch of this tree, starting from the node on the left where a comeout point of $t=9$ has just been rolled, is shown in Figure~\ref{crapsfigure}. The tree branches up to a winning leaf (shaded black) on a roll of $9$, down to a losing leaf (shaded gray) on a roll of $7$ and across on any other role. Each edge is labeled with the probability of following it from its left endpoint, and each leaf is labeled with the probability of reaching it from the root of the tree. 
\begin{center}
	\begin{tikzpicture}[scale=0.85]
		\foreach \x in {0,1,...,3} {%
			\coordinate (left) at (3*\x,0);
			\coordinate (top) at (3*\x+2,2);
			\coordinate (right) at (3*\x+3,0);
			\coordinate (bot) at (3*\x+2,-2);
			\draw (left) -- (top)  node [midway, fill=white] {$\frac{4}{36}$} node [above right] {$\frac{4}{36}\cdot \left(\frac{26}{36}\right)^{\x}$};
			\filldraw [black] (top) circle (3pt);
			\draw (left) -- (right) node [midway, fill=white] {$\frac{26}{36}$};
			\draw (left) -- (bot) node [midway, fill=white] {$\frac{6}{36}$} node [below right] {$\frac{6}{36}\cdot \left(\frac{26}{36}\right)^{\x}$};
			\filldraw [gray] (bot) circle (3pt);
			}
		\draw[dashed] (12,0) -- (13.5,0);	
	\end{tikzpicture}
\vspace{6pt}
\refstepcounter{equation}\label{crapsfigure} \textbf{Figure~\ref{crapsfigure}} Tree diagram for the game of craps \emph{after} a comeout roll of $9$.
\end{center}
The tree makes visible a formula for $\PPP(w | 9)$ as the sum $\frac{a}{1-r}=\frac{4}{10}$ of the geometric series with initial term $ a= \frac{4}{36}$ and ratio $r=\frac{26}{36}$. 

But there is a much easier way to see obtain this value from the tree, by noticing that we move to a leaf on any outcome in the event $ (7 \text{~or~} 9)$ and win when this outcome is a $9$. Hence $\PPP(w | 9) = \PPP\bigl(9 |\, (7 \text{~or~} 9)\bigr) = \frac{\PPP(9)}{\PPP(7 \text{~or~} 9)}$ which the first row of Table~\ref{crapstable} gives immediately as $\frac{4}{10}$. The other non-trivial entries in the second row of the table then follow analogously from the first row. 

\fi

\bibspread
\section*{References}
\begin{biblist}
	\bibselect{totaltoparts}
\end{biblist}

\newcounter{lastbib}
\setcounter{lastbib}{\value{bib}}

\section*{Software packages referenced}
\begin{biblist}[\setcounter{bib}{\value{lastbib}}]
\bib{Bertini}{article}{
  author={Bates, Daniel J.},
  author={Hauenstein, Jonathan D.},
  author={Sommese, Andrew J.},
  author={Wampler, Charles W.},
  title={\texttt{\upshape Bertini}: Software for Numerical Algebraic Geometry},
  date={2012},
  note={Version 1.3.1},
  eprint={\neturl{bertini.nd.edu}},
  doi={dx.doi.org/10.7274/R0H41PB5},
}
\bib{Macaulay}{article}{
  author={Grayson, Dan},
  author={Stillman, Mike},
  title={\texttt{\upshape Macaulay2}: a software system for research in algebraic geometry},
  date={2012},
  note={Version 1.5},
  eprint={\neturl{http://www.math.uiuc.edu/Macaulay2/}},
}
\bib{Magma}{article}{
  author={Computational Algebra Research Group$\textrm{,}$ School of Mathematics and Statistics$\textrm{,}$  University of Sydney},
  title={\texttt{\upshape MAGMA} computational algebra system},
  date={2012},
  note={Version 2.19-6},
  eprint={\neturl{http://magma.maths.usyd.edu.au/magma/}},
}
\bib{Maple}{article}{
  title={\texttt{\upshape Maple 15}},
  note={Maplesoft, a division of Waterloo Maple Inc., Waterloo, Ontario.},
}
\bib{QEPCADB}{article}{
  author={Hong, Hoon},
  title={\texttt{\upshape QEPCAD}: Quantifier Elimination by Partial Cylindrical Algebraic Decomposition},
  date={2011},
  note={Version B 1.65},
 eprint={\neturltilde{http://www.usna.edu/CS/~qepcad/B/QEPCAD.html}{http://www.usna.edu/CS/$\sim$qepcad/B/QEPCAD.html}},
}
\bib{SAGE}{article}{
  author={William A. Stein et al},
  title={Sage Mathematics Software},
  note={Version 5.5},
  publisher={The Sage Development Team},
  date={2013},
  eprint={\neturl{http://www.sagemath.org}},            
}
\end{biblist}
	
\setcounter{lastbib}{\value{bib}}

\section*{Computer-assisted calculations referenced}
\begin{biblist}[\setcounter{bib}{\value{lastbib}}]
\bib{codesamples}{article}{
  author={Morrison, Ian},
  author={Swinarski, David},
  title={Computer-assisted calculations for ``Can you play a fair game of craps with a loaded pair of dice?''},
  date={2013},
  note={In the body of the paper, citations of a single calculation include the name of a file (where possible as the text of a hyperlink to it) that contains the source code and output of the calculation. These files are located in the directory \neturl{\calcbaseurl} which also contains a descriptive table of contents \calcpage{toc.htm} numbered in parallel with this paper and linking to all the calculations. Citations of groups of calculations point to an anchor in the contents file at which links to the individual files may be found.},
	}
\end{biblist}

\end{document}